\documentclass[a4paper,11pt]{amsart}
\usepackage[titletoc,toc,title]{appendix}
\usepackage[left=2.7cm,right=2.7cm,top=3.5cm,bottom=3cm]{geometry}
\usepackage{amssymb,latexsym,amsmath,amsthm,amscd}
\usepackage{graphicx}
\usepackage{dsfont}
\usepackage[all]{xy}
\usepackage{mathrsfs}
\usepackage{color}
\definecolor{Blue}{rgb}{0.3,0.3,0.9}

\usepackage[T2A,OT1]{fontenc}
\DeclareSymbolFont{cyrillic}{T2A}{cmr}{m}{n}
\DeclareMathSymbol{\Sha}{\mathalpha}{cyrillic}{216}


\newcommand{\sk}{\vspace{0.1in}}
\vfuzz2pt 
\hfuzz2pt 


\newtheorem{thm}{Theorem}[section]
\newtheorem{def-thm}[thm]{Definition-Theorem}

\newtheorem{lem}[thm]{Lemma}
\newtheorem{def-lem}[thm]{Definition-Lemma}
\newtheorem{prop}[thm]{Proposition}

\newtheorem*{ThmA}{Theorem A}

\theoremstyle{definition}
\newtheorem{defn}[thm]{Definition}
\theoremstyle{remark}
\newtheorem*{rem}{Remark}
\numberwithin{thm}{section}
\numberwithin{equation}{section}


\newcommand{\cO}{\mathcal{O}}

\newcommand{\frakl}{\mathfrak{l}}
\newcommand{\pp}{\mathfrak{p}}

\newcommand{\bQ}{\mathbf{Q}}
\newcommand{\bR}{\mathbf{R}}
\newcommand{\bZ}{\mathbf{Z}}
\newcommand{\bC}{\mathbf{C}}

\newcommand{\h}{{H}}

\newcommand{\cR}{\mathbb{I}}

\def\ac{{\rm ac}}

\newcommand{\Ac}{{M}}

\newcommand{\unr}{R_0}



\begin{document}

\title
{On the $p$-part of the Birch--Swinnerton-Dyer formula 
for multiplicative primes} 
\author[F.~Castella]{Francesc Castella}
\address{Mathematics Department, Princeton University, Fine Hall, Princeton,
NJ 08544-1000, USA}
\email{fcabello@math.princeton.edu}

\thanks{This project has received funding from the European Research Council (ERC) under the European Union's
Horizon 2020 research and innovation programme (grant agreement No. 682152).}

\subjclass[2010]{11R23 (primary); 11G05, 11G40 (secondary)}





\maketitle

\begin{abstract}
Let $E/\bQ$ be a semistable elliptic curve of analytic rank one, and let $p>3$ be a prime for which
$E[p]$ is irreducible.
In this note, following a slight modification of the methods of 
\cite{JSW},
we use Iwasawa theory to establish the $p$-part of the Birch and Swinnerton-Dyer formula for $E$.
In particular, we extend the main result of \emph{loc.cit.} 
to primes of multiplicative reduction. 
\end{abstract}

\setcounter{tocdepth}{1}
\tableofcontents

\section{Introduction}


Let $E/\bQ$ be a semistable elliptic curve of conductor $N$, 
and let $L(E,s)$ be the Hasse--Weil $L$-function of $E$. By the celebrated work of Wiles \cite{Fermat-Wiles}
and Taylor--Wiles \cite{TW}, $L(E,s)$ is known to admit analytic continuation to the entire complex plane,
and to satisfy a functional equation relating its values at $s$ and $2-s$.
The purpose of this note is to prove the following result towards the Birch and Swinnerton-Dyer
formula for $E$. 

\begin{ThmA}
Let $E/\bQ$ be a semistable elliptic curve of conductor $N$ with ${\rm ord}_{s=1}L(E,s)=1$,
and let $p>3$ be a prime such that the mod $p$ Galois representation
\[
\bar{\rho}_{E,p}:{\rm Gal}(\overline{\bQ}/\bQ)\longrightarrow{\rm Aut}_{\mathbf{F}_p}(E[p]) 
\]
is irreducible.
If $p\mid N$, assume in addition that $E[p]$ is ramified at some prime $q\neq p$. 
Then
\[
{\rm ord}_p\left(\frac{L'(E,1)}{{\rm Reg}(E/\bQ)\cdot\Omega_E}\right)
={\rm ord}_p\biggl(\#\Sha(E/\bQ)\prod_{\ell\mid N}c_\ell(E/\bQ)\biggr),
\]
where 
\begin{itemize}
\item{} ${\rm Reg}(E/\bQ)$ is the discriminant of the N\'eron--Tate height pairing on $E(\bQ)\otimes\bR$;
\item{} $\Omega_E$ 
is the N\'eron period of $E$;
\item{} $\Sha(E/\bQ)$ is the Tate--Shafarevich group of $E$; and
\item{} $c_\ell(E/\bQ)$ is the Tamagawa number of $E$ at the prime $\ell$.
\end{itemize}
In other words, the $p$-part
of the Birch and Swinnerton-Dyer formula holds for $E$.
\end{ThmA}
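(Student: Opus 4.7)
The plan is to adapt the JSW strategy to the multiplicative-reduction setting, via an auxiliary imaginary quadratic field and Heegner-point Iwasawa theory. First I would choose $K/\bQ$ imaginary quadratic of discriminant coprime to $Np$ satisfying the Heegner hypothesis for $N$ (every prime dividing $N$ splits in $K$) and such that $L(E^K,1)\neq 0$; such a $K$ exists by the usual non-vanishing results for analytic ranks of quadratic twists. The factorization $L(E/K,s)=L(E,s)\cdot L(E^K,s)$ forces $E/K$ to have analytic rank one, and the Gross--Zagier formula equates $L'(E/K,1)$, up to explicit archimedean and local factors, with the N\'eron--Tate height of the basic Heegner point $P_K\in E(K)$. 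Since ${\rm Reg}(E/K)=\hat h(P_K)/[E(K):\bZ P_K]^2$, this reduces the $p$-part of BSD for $E/\bQ$ to (i) the $p$-part of BSD for the rank-zero twist $E^K/\bQ$, and (ii) an equality between the $p$-adic valuations of $[E(K):\bZ P_K]^2$ and of $\#\Sha(E/K)\prod_\ell c_\ell(E/K)$.

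Step (i) should follow, under the irreducibility of $\bar\rho_{E,p}$ and the auxiliary ramification hypothesis when $p\mid N$, from Kato's Euler system together with the cyclotomic Iwasawa main conjecture for $E^K$, once the local condition at $p$ is adjusted to the Greenberg filtration coming from multiplicative reduction. Step (ii) is obtained from the Heegner point main conjecture for the anticyclotomic $\bZ_p$-extension $K_\infty/K$: one divisibility follows from Kolyvagin's Euler system in $\Lambda$-adic form (after Howard), and the reverse divisibility from Eisenstein-congruence techniques (after Wan). Once this $\Lambda$-adic identity is in place, a control theorem descending from $K_\infty$ to $K$ converts it into the desired $p$-adic comparison between the Heegner index and $\#\Sha(E/K)\prod c_\ell(E/K)$. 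Combining (i), (ii), and the Gross--Zagier formula then yields the $p$-part of BSD for $E/\bQ$.

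The main obstacle, and the source of the ``slight modification'' alluded to in the abstract, is the local Iwasawa theory at the multiplicative prime $p$. The unramified Selmer condition used in JSW must be replaced by the Greenberg condition attached to the canonical rank-one $G_{\bQ_p}$-stable filtration on $T_pE$ arising from multiplicative reduction, and one must check that the $\Lambda$-adic Heegner class built from CM points of $p$-power conductor actually lies in this modified Selmer group. In the control step, the local term at $p$ then has to be identified with the Tamagawa factor $c_p(E/\bQ)$ rather than a trivial contribution, which is most delicate in the split multiplicative case where an exceptional zero appears on the analytic side. Finally, one verifies that the hypothesis that $E[p]$ be ramified at some $q\neq p$ is exactly the residual bigness input needed to run the Euler-system bounds underlying both the cyclotomic and the anticyclotomic main conjectures in this setting; managing these $p$-local contributions is where the bulk of the technical work should lie.
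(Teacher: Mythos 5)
Your high-level outline -- pick an auxiliary $K$ with $L(E^K,1)\neq 0$, invoke Gross--Zagier, reduce to an anticyclotomic Iwasawa-theoretic statement over $K_\infty/K$, and finish with the known $p$-part of BSD for the rank-zero twist -- is indeed the shape of the paper's argument, and the archimedean/local bookkeeping you describe is essentially \cite[\S 7]{JSW}. However, there is a genuine gap at the heart of the proposal: you do not explain how to establish the anticyclotomic main conjecture at a prime of \emph{multiplicative} reduction, and this is precisely the new technical content of the paper. You write that one divisibility follows from ``Kolyvagin's Euler system in $\Lambda$-adic form (after Howard)'' and the other from ``Eisenstein-congruence techniques (after Wan),'' but those results are established for good ordinary primes; neither applies off the shelf when $p\mid N$. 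The paper instead proves a two-variable Iwasawa--Greenberg main conjecture over the Hida family $\mathbf{f}$ deforming $f$ (Theorem~\ref{thm:2-var-IMC}), using a weight-two good-ordinary specialization where the one-variable IMC is already known (from \cite{cas-BF}) together with Wan's divisibility \cite{wanIMC}, and then \emph{descends} to the multiplicative-weight-two point by a control theorem in the Hida variable (Theorem~\ref{thm:control-hida}) plus a pseudo-null specialization lemma (Lemma~\ref{lem:ps}). This Hida-theoretic transfer is the mechanism that makes the argument work at $p\mid N$, and it is entirely absent from your write-up.

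Two further points of divergence. First, you impose the classical Heegner hypothesis ``every prime dividing $N$ splits in $K$'' with discriminant coprime to $Np$; the paper instead takes $q$ (the auxiliary prime where $E[p]$ is ramified) to be \emph{ramified} in $K$ and all other prime factors of $N$ split. Having a nonsplit prime $q\mid N$ with $\bar\rho_{E,p}$ ramified there is a running hypothesis in the main conjecture inputs (Theorems~\ref{thm:cas-BF}, \ref{thm:2-var-IMC}, \ref{thm:IMC}), and it is also what makes the Tamagawa contribution at $q$ vanish $p$-adically in the control-theorem computation; your choice of $K$ would not feed into those results. Second, the obstruction you identify at $p$ is not quite the right one: the anticyclotomic local condition (strict at $\pp$, relaxed at $\bar\pp$) is the same in the good-ordinary and multiplicative cases, and because the $p$-adic $L$-function here is the BDP-type one evaluated at the trivial character (outside the interpolation range), there is no exceptional-zero phenomenon to contend with. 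What actually changes at a multiplicative prime is the Euler factor in the $p$-adic Waldspurger formula (it becomes $(1-a_p p^{-1})^2$, from \cite{cas-split}) and the identification of the Tamagawa factor $c_p^{(p)}(E/K)$ in the index calculation inside the control theorem (Theorem~\ref{thm:control-greenberg}) -- and, again, the proof of the main conjecture itself, which is where the real work lies.
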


\begin{rem}
Having square-free conductor, any elliptic curve $E/\bQ$ as in Theorem~A is necessarily non-CM.
By \cite[Thm.~2]{serre-points}, if follows that $\bar{\rho}_{E,p}$ is in fact surjective for all but
finitely many primes $p$; by \cite[Thm.~4]{mazur-isogenies}, this holds as soon as $p\geqslant 11$.
\end{rem}

When $p$ is a prime of \emph{good} reduction for $E$, Theorem~A
(in the stated level of generality) was first established by Jetchev--Skinner--Wan \cite{JSW}.
(We should note that \cite[Thm.~1.2.1]{JSW} also allows $p=3$ provided $E$ has good supersingular
reduction at $p$, the assumption $a_3(E)=0$ having been removed in a recent work by Sprung; see \cite[Cor.~1.3]{sprung-IMC}.)
For primes $p\mid N$, 
some particular cases of Theorem~A are contained in the work of Skinner--Zhang (see \cite[Thm.~1.1]{skinner-zhang}) under further
hypotheses on $N$ and, in the case of split multiplicative reduction, on the $L$-invariant of $E$.
Thus the main novelty in Theorem~A is for primes $p\mid N$.
\sk

Similarly as in \cite{JSW}, our proof of Theorem~A uses anticyclotomic Iwasawa theory. In order to
clarify the relation between the arguments in \emph{loc.cit.} and the arguments in this paper, let us recall
that the proof of \cite[Thm.~1.2.1]{JSW} (for primes $p\nmid N$) is naturally divided into two steps:

\begin{enumerate}
\item{} \emph{Exact lower bound on the predicted order of $\Sha(E/\bQ)[p^\infty].$}
For this part of the argument, in \cite{JSW} one chooses a suitable imaginary quadratic field $K_1=\bQ(\sqrt{D_1})$ with
$L(E^{D_1},1)\neq 0$; combined with the hypothesis that $E$ has analytic rank one,
it follows that $E(K_1)$ has rank one and that $\#\Sha(E/K_1)<\infty$
by the work of Gross--Zagier and Kolyvagin.
The lower bound
\begin{equation}\label{eq:lower}
{\rm ord}_p(\#\Sha(E/K_1)[p^\infty])\geqslant 2\cdot{\rm ord}_p([E(K_1):\bZ.P_{K_1}])
-\sum_{\substack{w\mid N^+\\ \textrm{$w$\;split}}}{\rm ord}_p(c_w(E/K_1)),
\end{equation}
where $P_{K_1}\in E(K_1)$ is a Heegner point, $c_w(E/K_1)$ is the Tamagawa number of $E/K_1$
at $w$, and $N^+$ is the product of the prime factors of $N$ that are either split or ramified in $K_1$,
is then established by combining:
\begin{enumerate}
\item[(1.a)] A Mazur control theorem proved ``\`a la Greenberg'' \cite{greenberg-cetraro}
for an anticyclotomic Selmer group $X_{\rm ac}(E[p^\infty])$ attached to $E/K_1$ (\cite[Thm.~3.3.1]{JSW});
\item[(1.b)] The proof by Xin~Wan \cite{wanIMC}, \cite{wanIMC-SS} of one of
the divisibilities predicted by the Iwasawa--Greenberg Main Conjecture for $X_{\rm ac}^\Sigma(E[p^\infty])$,
namely the divisibility
\[
Ch_{\Lambda_{}}(X_{\rm ac}(E[p^\infty]))\Lambda_{R_0}\subseteq(L_p(f))
\]
where $f=\sum_{n=1}^\infty a_nq^n$ 
is the weight 2 newform associated with $E$, $\Lambda_{R_0}$ is a scalar extension of
the anticyclotomic Iwasawa algebra $\Lambda$ for $K_1$,
and $L_p(f)\in\Lambda_R$ is a certain anticyclotomic $p$-adic $L$-function;
\item[(1.c)] The ``$p$-adic Waldspurger formula'' of Bertolini--Darmon--Prasanna \cite{bdp1}
(as extended by Brooks \cite{brooks} 
to indefinite Shimura curves):
\[
L_p(f,\mathds{1})=(1-a_pp^{-1}+p^{-1})^2\cdot(\log_{\omega_E}P_{K_1})^2
\]
relating the value of $L_p(f)$ at the trivial character
to the formal group logarithm of the Heegner point $P_{K_1}$.
\end{enumerate}
When combined with the known $p$-part of the Birch and Swinnerton-Dyer formula for
the quadratic twist $E^{D_1}/\bQ$
(being of rank analytic zero, this follows from 
\cite{SU} and \cite{wan-kobayashi}), inequality $(\ref{eq:lower})$ easily yields the exact lower bound for $\#\Sha(E/\bQ)[p^\infty]$
predicted by the BSD conjecture.

\item{} \emph{Exact upper bound on the predicted order of $\Sha(E/\bQ)[p^\infty]$.}
For this second part of the argument, in \cite{JSW} one chooses another imaginary quadratic field $K_2=\bQ(\sqrt{D_2})$
(in general different from $K_1$) such that $L(E^{D_2},1)\neq 0$. Crucially, $K_2$
is chosen so that the associated $N^+$
(the product of the prime factors of $N$ that are split or ramified in $K_2$)
is \emph{as small as possible} in a certain sense; this ensures optimality of
the upper bound provided by Kolyvagin's methods:
\begin{equation}\label{eq:upper-bound}
{\rm ord}_p(\#\Sha(E/K_2)[p^\infty])\leqslant 2\cdot{\rm ord}([E(K_2):\bZ.P_{K_2}]),
\end{equation}
where $P_{K_2}\in E(K_2)$ is a Heegner point coming from a parametrization of $E$
by a Shimura curve attached to an indefinite quaternion algebra
(which is nonsplit unless $N$ is prime).
Combined with the 
general Gross--Zagier formula \cite{YZZ} 
and the $p$-part of the Birch and Swinnerton-Dyer formula for $E^{D_2}/\bQ$, inequality $(\ref{eq:upper-bound})$ then yields
the predicted optimal upper bound for $\#\Sha(E/\bQ)[p^\infty]$.
\end{enumerate}

Our proof of Theorem~A dispenses with part (2) of the above
argument; 
in particular, it only requires the use of classical modular parametrizations of $E$.
Indeed, if $K$ is an imaginary quadratic field satisfying the following 
hypotheses relative to the square-free integer $N$:
\[
\begin{split}
\bullet\; & 
\textrm{every prime factor of $N$ is either split or ramified in $K$};\\
\bullet\; & 
\textrm{there is at least one prime $q\mid N$ nonsplit in $K$};\\
\bullet\; & 
\textrm{$p$ splits in $K$},
\end{split}
\]
in \cite{cas-BF} (for good ordinary $p$) and \cite{cas-wan-ss} (for good supersingular $p$) we have completed
under mild hypotheses the proof of the Iwasawa--Greenberg main conjecture
for the associated $X_{\rm ac}(E[p^\infty])$:  
\begin{equation}\label{eq:IMC}
Ch_{\Lambda_{}}(X_{\rm ac}(E[p^\infty]))\Lambda_{R_0}=(L_p(f)).
\end{equation}
With this result at hand, a simplified form (since $N^-=1$ here)
of the arguments from \cite{JSW}
in part (1) above lead
to an \emph{equality} in $(\ref{eq:lower})$ taking $K_1=K$,
and so to the predicted order of $\Sha(E/\bQ)[p^\infty]$ when $p\nmid N$.
\sk

To treat the primes $p\mid N$
of multiplicative reduction for $E$ (which, as already noted, is the only
new content of Theorem~A),
we use Hida theory. Indeed, if $a_p$ is the $U_p$-eigenvalue of $f$ for such $p$,
we know that $a_p\in\{\pm 1\}$, so in particular $f$ is ordinary at $p$.
Let $\mathbf{f}\in\cR[[q]]$ be the Hida family associated with $f$,
where $\cR$ is a certain finite flat extension of the one-variable
Iwasawa algebra.
In Section~\ref{sec:MC}, we deduce from 
\cite{cas-BF} and 
\cite{wanIMC} a proof 
of a two-variable analog of the Iwasawa--Greenberg main conjecture $(\ref{eq:IMC})$ over the Hida family:
\[
Ch_{\Lambda_{\cR}}(X_{\rm ac}(A_{\mathbf{f}}))\Lambda_{\cR,R_0}=(L_p(\mathbf{f})),
\]
where $L_p(\mathbf{f})\in\Lambda_{\cR,R_0}$ is the two-variable anticyclotomic $p$-adic
$L$-function introduced in \cite{cas-2var}.
By construction, $L_p(\mathbf{f})$ specializes to $L_p(f)$
in weight $2$, 
and by a control theorem for the Hida variable, 
the characteristic ideal of $X_{\rm ac}(A_{\mathbf{f}})$ similarly specializes to $Ch_\Lambda(X_{\rm ac}(E[p^\infty]))$,
yielding a proof of the Iwasawa--Greenberg main conjecture $(\ref{eq:IMC})$ in the multiplicative
reduction case. Combined with the anticyclotomic control theorem of (1.a)
and the natural generalization (contained in \cite{cas-split})
of the $p$-adic Waldspurger formula in (1.c) to this case: 
\[
L_p(f,\mathds{1})=(1-a_pp^{-1})^2\cdot(\log_{\omega_E}P_{K})^2,
\]
we arrive at the predicted formula for $\#\Sha(E/\bQ)[p^\infty]$
just as in the good reduction case.
\sk


\noindent\emph{Acknowledgements.} As will be clear to the reader, this note borrows many ideas
and arguments from \cite{JSW}. 
It is a pleasure to thank Chris Skinner 
for several useful conversations.

\section{Selmer groups}\label{sec:Selmer}

\subsection{Definitions}\label{sec:defs}

Let $E/\bQ$ be a semistable elliptic curve of conductor $N$, and let $p\geqslant 5$ be a prime such that
the mod $p$ Galois representations
\[
\bar{\rho}_{E,p}:{\rm Gal}(\overline{\bQ}/\bQ)\longrightarrow{\rm Aut}_{\mathbf{F}_p}(E[p])
\]
is irreducible. Let $T=T_p(E)$ be the $p$-adic Tate module of $E$, and set $V=T\otimes_{\bZ_p}\bQ_p$.

Let $K$ be an imaginary quadratic field in which $p=\pp\overline{\pp}$ splits,
and for every place $w$ of $K$ define the \emph{anticyclotomic local condition}
$\h^1_{\ac}(K_w,V)\subseteq\h^1(K_w,V)$ by
\[
\h^1_{\ac}(K_w,V):=\left\{
\begin{array}{ll}
\h^1(K_{\overline{\pp}},V)&\textrm{if $w=\overline{\pp}$;}\\
0&\textrm{if $w=\pp$;}\\
\h^1_{\rm ur}(K_w,V)&\textrm{if $w\nmid p$},
\end{array}
\right.
\]
where $\h^1_{\rm ur}(K_w,V):={\rm ker}\{\h^1(K_w,V)\rightarrow\h^1(I_w,V)\}$
is the unramified part of cohomology.

\begin{defn}
The \emph{anticyclotomic Selmer group} for $E$ is
\[
H^1_{\rm ac}(K,E[p^\infty])
:={\rm ker}\biggl\{\h^1(K,E[p^\infty])\longrightarrow\prod_w\frac{\h^1(K_w,E[p^\infty])}{\h^1_{\ac}(K_w,E[p^\infty])}\biggr\},
\]
where $\h^1_{\ac}(K_w,E[p^\infty])\subseteq \h^1(K_w,E[p^\infty])$ is the image of $\h^1_{\ac}(K_w,V)$ under the natural map
$\h^1(K_w,V)\rightarrow\h^1(K_w,V/T)\simeq\h^1(K_w,E[p^\infty])$.
\end{defn}


Let $\Gamma={\rm Gal}(K_\infty/K)$ be the Galois group of the anticyclotomic
$\bZ_p$-extension of $K$, and let $\Lambda=\bZ_p[[\Gamma]]$ be
the anticyclotomic Iwasawa algebra. Consider the $\Lambda$-module
\[
\Ac:=T\otimes_{\bZ_p}\Lambda^*,
\]
where $\Lambda^*={\rm Hom}_{\rm cont}(\Lambda,\bQ_p/\bZ_p)$ is
the Pontryagin dual of $\Lambda^\ac$. Letting $\rho_{E,p}$ denote the
natural action of $G_K:={\rm Gal}(\overline{\bQ}/K)$ on $T$, the $G_K$-action on
$\Ac$ is given by $\rho_{E,p}\otimes\Psi^{-1}$, where $\Psi$ is the
composite character $G_K\twoheadrightarrow\Gamma\hookrightarrow\Lambda^\times$.

\begin{defn}\label{def:ac-Sel}
The \emph{anticyclotomic Selmer group} for $E$ over $K_\infty^{\rm ac}/K$ is defined by
\[
{\rm Sel}_{\pp}(K_\infty,E[p^\infty]):={\rm ker}\biggl\{H^1(K,\Ac)\longrightarrow
H^1(K_\pp,\Ac)\oplus\prod_{w\nmid p}H^1(K_w,\Ac)\biggr\}.
\]
\end{defn}
More generally, for any given finite set $\Sigma$ of places $w\nmid p$ of $K$, define the
``$\Sigma$-imprimitive'' Selmer group ${\rm Sel}_\pp^\Sigma(K_\infty,E[p^\infty])$ 
by dropping the summands $H^1(K_w,\Ac)$ 
for the places $w\in\Sigma$ in the above definition. Set
\[
X_{\ac}^\Sigma(E[p^\infty]):={\rm Hom}_{\bZ_p}({\rm Sel}_\pp^\Sigma(K_\infty,E[p^\infty]),\bQ_p/\bZ_p),
\]
which is easily shown to be a finitely generated $\Lambda$-module.


\subsection{Control theorems}\label{sec:control}

Let $E$, $p$, and $K$ be an in the preceding section, and let
$N^+$ denote the product of the prime factors of $N$ which are split in $K$.

\subsubsection*{Anticlotomic Control Theorem}

Denote by $\hat{E}$ the formal group of $E$, and let
\[
\log_{\omega_E}:E(\bQ_p)\longrightarrow\bZ_p
\]
the formal group logarithm attached to a fixed invariant differential $\omega_E$
on $\hat{E}$. Letting $\gamma\in\Gamma$ be a fixed topological generator, we identify the one-variable
power series ring $\bZ_p[[T]]$ with the Iwasawa algebra $\Lambda=\bZ_p[[\Gamma]]$ by sending
$1+T\mapsto\gamma$.

\begin{thm}\label{thm:control-greenberg}
Let $\Sigma$ be any set of places of $K$ not dividing $p$, and
assume that ${\rm rank}_{\bZ}(E(K))=1$ and that $\#\Sha(E/K)[p^\infty]<\infty$.
Then $X_{\rm ac}^\Sigma(E[p^\infty])$ is $\Lambda$-torsion, and
letting $f_{\rm ac}^\Sigma(T)\in\Lambda$ be a generator of $Ch_{\Lambda}(X_{\rm ac}^\Sigma(E[p^\infty]))$,
we have
\begin{align*}
\#\bZ_p/f_{\rm ac}^\Sigma(0)&=\#\Sha(E/K)[p^\infty]\cdot
\Biggl(\frac{\#\bZ_p/((1-a_pp^{-1}+\varepsilon_p)\log_{\omega_E}P)}{[E(K)\otimes\bZ_p:\bZ_p.P]}\Biggr)^2\\
&\quad\times\prod_{\substack{w\mid N^+\\w\not\in\Sigma}}c^{(p)}_w(E/K)\cdot\prod_{w\in\Sigma}\#H^1(K_w,E[p^\infty]),
\end{align*}
where $\varepsilon_p=p^{-1}$ if $p\nmid N$ and $\varepsilon_p=0$ otherwise,
$P\in E(K)$ is any point of infinite order,
and $c_w^{(p)}(E/K)$ is the $p$-part of the Tamagawa number of $E/K$ at $w$.
\end{thm}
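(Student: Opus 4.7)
The plan is to adapt the Greenberg-style control theorem of \cite[Thm.~3.3.1]{JSW} (itself modeled on \cite{greenberg-cetraro}) to accommodate primes $p\mid N$ of multiplicative reduction. The overarching idea is to specialize $X_{\rm ac}^\Sigma(E[p^\infty])$ at the augmentation ideal of $\Lambda$ and identify the resulting $\bZ_p$-module with (the Pontryagin dual of) a classical Selmer group over $K$, whose order is computed using the rank-one and finite-$\Sha$ hypotheses, with the various local discrepancies producing the factors on the right-hand side of the stated formula.

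The first step is a Mazur-type comparison: I would run the snake lemma on the global-to-local defining sequences for ${\rm Sel}_\pp^\Sigma(K_\infty,E[p^\infty])$ and its analogue over $K$, reducing the comparison to $\Gamma$-cohomology of $E[p^\infty]$ at the global and local levels. Irreducibility of $\bar\rho_{E,p}$ together with the split hypothesis $p=\pp\overline{\pp}$ makes the relevant Herbrand quotients explicit, and it follows that $X_{\rm ac}^\Sigma(E[p^\infty])$ is $\Lambda$-torsion with $f_{\rm ac}^\Sigma(0)\neq0$, so $\#\bZ_p/f_{\rm ac}^\Sigma(0)=\#X_{\rm ac}^\Sigma(E[p^\infty])_\Gamma$. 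The resulting ``strict'' Selmer group over $K$ can then be compared with the Bloch--Kato Selmer group, which under the hypotheses fits into
\[
0\longrightarrow E(K)\otimes\bQ_p/\bZ_p\longrightarrow{\rm Sel}_{\rm BK}(K,E[p^\infty])\longrightarrow\Sha(E/K)[p^\infty]\longrightarrow 0,
\]
so its order equals $[E(K)\otimes\bZ_p:\bZ_p.P]^{-2}\cdot\#\Sha(E/K)[p^\infty]$ times local corrections. Careful bookkeeping of the local-condition differences at $\pp$, at $\overline{\pp}$, at the bad primes $w\mid N^+$, and at $\Sigma$ then yields the factors $((1-a_pp^{-1}+\varepsilon_p)\log_{\omega_E}P)^2$, the product $\prod_{w\mid N^+, w\notin\Sigma}c_w^{(p)}(E/K)$, and $\prod_{w\in\Sigma}\#H^1(K_w,E[p^\infty])$ respectively.

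The main obstacle is the local computation at $\pp$ when $p\mid N$. In that case $V|_{G_{K_\pp}}$ is a ramified extension of an unramified $\bQ_p$-line by its Tate twist, so the Bloch--Kato local exponential and its compatibility with $\log_{\omega_E}$ must be verified directly via the Tate-curve uniformization of $E$. The Euler factor $(1-a_pp^{-1})^2$ (as opposed to $(1-a_pp^{-1}+p^{-1})^2$ in the good ordinary case) arises because one of the inverse roots of the local $L$-factor vanishes in the multiplicative setting; checking that this factor matches exactly the local discrepancy produced by the control theorem, and that the identification with $\log_{\omega_E}P$ remains valid, is the crucial technical point distinguishing the multiplicative case from that of \cite{JSW}.
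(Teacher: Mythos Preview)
Your overall strategy is sound and matches the paper's in spirit, but you are proposing to redo more than is necessary. The paper observes that \cite[Prop.~3.2.1]{JSW} and \cite[Thm.~3.3.1]{JSW} are stated and proved without any assumption on the reduction type of $E$ at $p$; the only place in \cite{JSW} where good reduction is used is in the explicit computation of the local index $[E(K_\pp)_{/{\rm tors}}\otimes\bZ_p:\bZ_p.P]$ on \cite[p.~23]{JSW}. So the paper simply quotes those two results and redoes only that local index calculation when $p\mid N$, rather than rerunning the entire snake-lemma control argument.

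For that local index, the paper's method is also more elementary than what you outline. Instead of invoking the Bloch--Kato exponential and the Tate uniformization, it uses the N\'eron-model filtration $E_1(K_\pp)\subset E_0(K_\pp)\subset E(K_\pp)$: the formal logarithm sends $E_1(K_\pp)$ isomorphically onto $p\bZ_p$, and $\tilde{E}_{\rm ns}(\mathbf{F}_p)$ has order prime to $p$, so one gets $[E(K_\pp)_{/{\rm tors}}\otimes\bZ_p:\bZ_p.P]=[E(K_\pp):E_0(K_\pp)]_p\cdot\#\bZ_p/(\tfrac{1}{p}\log_{\omega_E}P)/\#H^0(K_\pp,E[p^\infty])$. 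Since $1-a_pp^{-1}=(p\mp 1)/p$ differs from $1/p$ by a $p$-adic unit, this produces the stated Euler factor. One point your sketch does not flag: the extra factor $[E(K_\pp):E_0(K_\pp)]_p=c_\pp^{(p)}(E/K)$ that falls out of this computation is exactly the Tamagawa number at $\pp$, and since $p$ splits in $K$ (so $\pp\mid N^+$), it is absorbed into the product $\prod_{w\mid N^+,\,w\notin\Sigma}c_w^{(p)}(E/K)$. Your approach via the local $L$-factor would reach the same endpoint, but you should make sure this Tamagawa contribution is accounted for.
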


\begin{proof}
As we are going to show, this follows easily from the ``Anticyclotomic Control Theorem'' established in
\cite[\S{3.3}]{JSW}. The hypotheses imply that ${\rm corank}_{\bZ_p}{\rm Sel}(K,E[p^\infty])=1$
and that the natural map
\[
E(K)\otimes\bQ_p/\bZ_p\longrightarrow E(K_w)\otimes\bQ_p/\bZ_p
\]
is surjective for all $w\mid p$. By \cite[Prop.~3.2.1]{JSW} (see also the discussion in [\emph{loc.cit.}, p.~22])
it follows that $H^1_{\ac}(K,E[p^\infty])$ is finite with
\begin{equation}\label{eq:321}
\#H^1_{\ac}(K,E[p^\infty])=\#\Sha(E/K)[p^\infty]\cdot
\frac{[E(K_\pp)_{/{\rm tors}}\otimes_{}\bZ_p:\bZ_p.P]^2}{[E(K)\otimes\bZ_p:\bZ_p.P]^2},
\end{equation}
where $E(K_\pp)_{/{\rm tors}}$ 
is the quotient $E(K_\pp)$ by its maximal torsion submodule,
and $P\in E(K)$ is any point of infinite order. If $p\nmid N$, then
\begin{equation}\label{eq:p-notdiv}
[E(K_\pp)_{/{\rm tors}}\otimes\bZ_p:\bZ_p.P]=\frac{\#\bZ_p/((\frac{1-a_p+p}{p})\log_{\omega_E}P)}{\#H^0(K_\pp,E[p^\infty])}
\end{equation}
as shown in \cite[p.~23]{JSW}, and substituting $(\ref{eq:p-notdiv})$ into $(\ref{eq:321})$
we arrive at
\[
\#H^1_{\ac}(K,E[p^\infty])=\#\Sha(E/K)[p^\infty]\cdot
\Biggl(\frac{\#\bZ_p/((\frac{1-a_p+p}{p})\log_{\omega_E}P)}{[E(K)\otimes\bZ_p:\bZ_p.P]\cdot\#H^0(K_\pp,E[p^\infty])}\Biggr)^2,
\]
from where the result follows immediately by \cite[Thm.~3.3.1]{JSW}.

Suppose now that $p\mid N$. Let $\tilde{E}_{\rm ns}(\mathbf{F}_p)$ be the group on nonsingular points on the reduction
of $E$ modulo $p$, $E_0(K_\pp)$ be the inverse image of $\tilde{E}_{\rm ns}(\mathbf{F}_p)$
under the reduction map, and $E_1(K_\pp)$
be defined by the exactness of the sequence
\begin{equation}\label{eq:red-p}
0\longrightarrow E_1(K_\pp)\longrightarrow E_0(K_\pp)\longrightarrow\tilde{E}_{\rm ns}(\mathbf{F}_p)\longrightarrow 0.
\end{equation}
The formal group logarithm defines an injective homomorphism $\log_{\omega_E}:E(K_\pp)_{/{\rm tor}}\otimes_{}\bZ_p\rightarrow\bZ_p$
mapping $E_1(K_\pp)$ isomorphically onto $p\bZ_p$, and hence we see that
\begin{align}\label{eq:tam}
[E(K_\pp)_{/{\rm tors}}\otimes\bZ_p:\bZ_p.P]&=\frac{\#\bZ_p/(\log_{\omega_E}P)\cdot\#(E(K_\pp)/E_1(K_\pp)\otimes\bZ_p)}
{\#\bZ_p/p\bZ\cdot\#(E(K_\pp)_{\rm tors}\otimes\bZ_p)}\nonumber \\
&=[E(K_\pp):E_0(K_\pp)]_p\cdot\frac{\#\bZ_p/(\log_{\omega_E}P)\cdot\#(E_0(K_\pp)/E_1(K_\pp)\otimes\bZ_p)}
{\#\bZ_p/p\bZ_p\cdot\#(E(K_\pp)_{\rm tors}\otimes\bZ_p)},\nonumber
\end{align}
where the first equality follows from the same immediate calculation as in \cite[p.~23]{JSW},
and in the second equality $[E(K_\pp):E_0(K_\pp)]_p$ denotes the $p$-part of the
index $[E(K_\pp):E_0(K_\pp)]$. By $(\ref{eq:red-p})$, we have $E_1(K_\pp)/E_0(K_\pp)\otimes\bZ_p\simeq\tilde{E}_{\rm ns}(\mathbf{F}_p)\otimes\bZ_p$,
which is trivial by e.g. \cite[Prop.~5.1]{silverman-2} (and $p>2$).
Since clearly $E(K_\pp)_{\rm tors}\otimes\bZ_p=H^0(K_\pp,E[p^\infty])$,
we thus 
conclude that
\begin{equation}\label{eq:calcul}
[E(K_\pp)_{/{\rm tors}}\otimes\bZ_p:\bZ_p.P]=[E(K_\pp):E_0(K_\pp)]_p
\cdot\frac{\#\bZ/(\frac{1}{p}\log_{\omega_E}P)}{\#H^0(K_\pp,E[p^\infty])},
\end{equation}
and substituting $(\ref{eq:calcul})$ into $(\ref{eq:321})$ we arrive at
\[
H^1_{\rm ac}(K,E[p^\infty])=\#\Sha(E/K)[p^\infty]
\cdot\left(\frac{[E(K_\pp):E_0(K_\pp)]_p\cdot\#\bZ_p/(\frac{1}{p}\log_{\omega_E}P)}
{[E(K)\otimes\bZ_p:\bZ_p.P]\cdot\#H^0(K_\pp,E[p^\infty])}\right)^2.
\]
Plugging this formula for $H^1_{\rm ac}(K,E[p^\infty])$ into \cite[Thm.~3.3.1]{JSW}
yields the equality
\begin{equation}\label{eq:control-p}
\begin{split}
\#\bZ_p/f_{\rm ac}^\Sigma(0)&=\#\Sha(E/K)[p^\infty]\cdot
\Biggl(\frac{\#\bZ_p/(\frac{1}{p}\log_{\omega_E}P)}{[E(K)\otimes\bZ_p:\bZ_p.P]}\Biggr)^2\cdot[E(K_\pp):E_0(K_\pp)]_p^2\\
&\quad\times
\prod_{\substack{w\in S\smallsetminus\Sigma\\w\nmid p\;{\rm split}}}\#H_{\rm ur}^1(K_w,E[p^\infty])\cdot\prod_{w\in\Sigma}\#H^1(K_w,E[p^\infty]),
\end{split}
\end{equation}
where $S$ is any finite set of places of $K$ 
containing $\Sigma$ and the primes above $N$. Now, if $w\mid p$, then
\begin{equation}\label{eq:tam-p}
[E(K_\pp):E_0(K_\pp)]_p=c_w^{(p)}(E/K)
\end{equation}
by definition, while if $w\nmid p$, then
\begin{equation}\label{eq:tam-not-p}
\#H_{\rm ur}^1(K_w,E[p^\infty])=c^{(p)}_w(E/K)
\end{equation}
by \cite[Lem.~9.1]{skinner-zhang}. Since $c^{(p)}_w(E/K)=1$ unless $w\mid N$,
substituting $(\ref{eq:tam-p})$ and $ (\ref{eq:tam-not-p})$ into $(\ref{eq:control-p})$,
the proof of Theorem~\ref{thm:control-greenberg} follows.
\end{proof}

\subsubsection*{Control Theorem for Greenberg Selmer groups}

Let $\Lambda_W=\bZ_p[[W]]$ be a one-variable power series ring.
Let $M$ be an integer prime to $p$,
let $\chi$ be a Dirichlet character modulo $pM$,
and let $\mathbf{f}=\sum_{n=1}^\infty\mathbf{a}_nq^n\in\cR[[q]]$ be an ordinary $\cR$-adic cusp eigenform
of tame level $M$ and character $\chi$ (as defined in \cite[\S{3.3.9}]{SU})
defined over a local reduced finite integral extension $\cR/\Lambda_W$.

Let $\mathcal{X}^a_\cR$ the set of continuous $\bZ_p$-algebra homomorphisms
$\phi:\cR\rightarrow\overline{\bQ}_p$ whose composition with the structural map
$\Lambda_W\rightarrow\cR$ is given by $\phi(1+W)=(1+p)^{k_\phi-2}$
for some integer $k_\phi\in\bZ_{\geqslant 2}$ called the \emph{weight} of $\phi$.
Then 
for all $\phi\in\mathcal{X}^a_\cR$ we have
\[
\mathbf{f}_\phi=\sum_{n=1}^\infty\phi(\mathbf{a}_n)q^n\in S_{k_\phi}(\Gamma_0(pM),\chi\omega^{2-k_\phi}),
\]
where $\omega$ is the Teichm\"uller character.
In this paper will only consider the case where $\chi$ is the trivial character,
in which case for all $\phi\in\mathcal{X}^a_\cR$ of weight $k_\phi\equiv 2\pmod{p-1}$, either
\begin{enumerate}
\item{} $\mathbf{f}_\phi$ is a newform on $\Gamma_0(pM)$;
\item{} $\mathbf{f}_\phi$ is the $p$-stabilization of a $p$-ordinary newform on $\Gamma_0(M)$.
\end{enumerate}
As is well-known, 
for weights $k_\phi>2$ only case (2) is possible;
for $k_\phi=2$ both cases occur.

Let $k_\cR=\cR/\mathfrak{m}_\cR$ be the residue field of $\cR$, and assume
that the residual Galois representation
\[
\bar{\rho}_{\mathbf{f}}:G_\bQ:={\rm Gal}(\overline{\bQ}/\bQ)\longrightarrow{\rm GL}_2(\kappa_\cR)
\]
attached to $\mathbf{f}$ is irreducible. Then there exists a free $\cR$-module $T_{\mathbf{f}}$
of rank two equipped with a continuous $\cR$-linear action of $G_{\bQ}$ such that,
for all $\phi\in\mathcal{X}_\cR^{a}$,
there is a canonical $G_\bQ$-isomorphism
\[
T_{\mathbf{f}}\otimes_{\cR}\phi(\cR)\simeq T_{\mathbf{f}_\phi},
\]
where $T_{\mathbf{f}_\phi}$ is a $G_\bQ$-stable lattice in the
Galois representation $V_{\mathbf{f}_\phi}$ associated with $\mathbf{f}_\phi$.
(Here, $T_{\mathbf{f}}$ corresponds to the Galois representation denoted $M(\mathbf{f})^*$ in \cite[Def.~7.2.5]{KLZ0};
in particular, ${\rm det}(V_{\mathbf{f}_\phi})=\epsilon^{k_\phi-1}$,
where $\epsilon:G_\bQ\rightarrow\bZ_p^\times$ is the $p$-adic cyclotomic character.)

Let $\Lambda_{\cR}:=\cR[[\Gamma]]$ be the anticylotomic Iwasawa algebra over $\cR$, and
consider the $\Lambda_{\cR}$-module
\[
M_{\mathbf{f}}:=T_{\mathbf{f}}\otimes_{\cR}\Lambda_{\cR}^*,
\]
where $\Lambda_\cR^*={\rm Hom}_{\rm cont}(\Lambda_{\cR},\bQ_p/\bZ_p)$ is the Pontrjagin dual
of $\Lambda_\cR$. This is equipped with a natural $G_K$-action defined similarly as for the
$\Lambda$-module $M=T\otimes_{\bZ_p}\Lambda^*$ introduced in $\S\ref{sec:defs}$.

\begin{defn}
The \emph{Greenberg Selmer group} of $E$ over $K_\infty/K$ is
\[
\mathfrak{Sel}_{\rm Gr}(K_\infty,E[p^\infty]):={\rm ker}\biggl\{H^1(K,\Ac)\longrightarrow
H^1(I_\pp,\Ac)\oplus\prod_{w\nmid p}H^1(I_w,\Ac)\biggr\}.
\]
The \emph{Greenberg Selmer group} $\mathfrak{Sel}_{\rm Gr}(K_\infty,A_{\mathbf{f}})$
for $\mathbf{f}$ over $K_\infty/K$, where $A_{\mathbf{f}}:=T_{\mathbf{f}}\otimes_{\cR}\cR^*$,
is defined by replacing $M$ by $M_{\mathbf{f}}$ in the above definition.
\end{defn}

Similarly as for the anticyclotomic Selmer groups in $\S\ref{sec:defs}$,
for any given finite set $\Sigma$ of places $w\nmid p$ of $K$,
we define $\Sigma$-imprimitive Selmer groups $\mathfrak{Sel}_{\rm Gr}^\Sigma(K_\infty,E[p^\infty])$
and $\mathfrak{Sel}_{\rm Gr}^\Sigma(K_\infty,A_{\mathbf{f}})$
by dropping the summands $H^1(I_w,\Ac)$ and $H^1(I_w,\Ac_{\mathbf{f}})$, respectively,
for the places $w\in\Sigma$ in the above definition. Let
\[
X_{\rm Gr}^\Sigma(E[p^\infty]):={\rm Hom}_{\rm cont}(\mathfrak{Sel}^\Sigma_{\rm Gr}(K_\infty,E[p^\infty]),\bQ_p/\bZ_p)
\]
be the Pontrjagin dual of $\mathfrak{Sel}^\Sigma_{\rm Gr}(K_\infty,E[p^\infty])$,
and define $X_{\rm Gr}^\Sigma(A_{\mathbf{f}})$ similarly.
\sk

We will have use for the following comparison between the Selmer groups
$\mathfrak{Sel}_{\rm Gr}(K_\infty,E[p^\infty])$ and ${\rm Sel}_\pp(K_\infty,E[p^\infty])$. Note that directly
from the definition we have an exact sequence
\begin{equation}\label{eq:defs}
0\longrightarrow{\rm Sel}_\pp(K_\infty,E[p^\infty])\longrightarrow\mathfrak{Sel}_{\rm Gr}(K_\infty,E[p^\infty])
\longrightarrow\mathcal{H}_\pp^{\rm ur}\oplus\prod_{w\nmid p}\mathcal{H}_w^{\rm ur},
\end{equation}
where $\mathcal{H}_v^{\rm ur}={\rm ker}\{H^1(K_v,M)\rightarrow H^1(I_v,M)\}$ is the set of unramified cocycles.

For a torsion $\Lambda$-module $X$, let $\lambda(X)$ (resp. $\mu(X)$) denote the
$\lambda$-invariant (resp. $\mu$-invariant) of a generator of $Ch_\Lambda(X)$.

\begin{prop}\label{prop:comparison}
Assume that $X_{\rm Gr}^\Sigma(E[p^\infty])$ is $\Lambda$-torsion. Then $X_{\rm ac}^\Sigma(E[p^\infty])$
is $\Lambda$-torsion, and we have the relations
\[
\lambda(X_{\rm Gr}^\Sigma(E[p^\infty]))=\lambda(X_{\rm ac}^\Sigma(E[p^\infty])),
\]
and
\[
\mu(X_{\rm Gr}^\Sigma(E[p^\infty]))=\mu(X_{\rm ac}^\Sigma(E[p^\infty]))+
\sum_{w\;{\rm nonsplit}}{\rm ord}_p(c_w(E/K)).
\]
\end{prop}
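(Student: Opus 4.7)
The plan is to Pontryagin-dualize the exact sequence $(\ref{eq:defs})$, suitably adapted to the $\Sigma$-imprimitive setting by dropping the local conditions at places $w\in\Sigma$. Writing $Q$ for the image of $\mathfrak{Sel}_{\rm Gr}^\Sigma(K_\infty,E[p^\infty])\to\bigoplus_v\mathcal{H}_v^{\rm ur}$ -- where the sum ranges over $v=\pp$ and over primes $w\nmid p$ of $K$ with $w\notin\Sigma$ -- this yields a short exact sequence
$$0\longrightarrow Q^\vee\longrightarrow X_{\rm Gr}^\Sigma(E[p^\infty])\longrightarrow X_{\rm ac}^\Sigma(E[p^\infty])\longrightarrow 0.$$
Once each $(\mathcal{H}_v^{\rm ur})^\vee$ is shown to be $\Lambda$-torsion with $\lambda=0$, additivity of Iwasawa invariants in short exact sequences immediately yields the $\Lambda$-torsion property of $X_{\rm ac}^\Sigma(E[p^\infty])$ and the equality $\lambda(X_{\rm Gr}^\Sigma)=\lambda(X_{\rm ac}^\Sigma)$, reducing the $\mu$-comparison to computing $\mu(Q^\vee)$.

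The local analysis splits according to the restriction of the anticyclotomic character $\Psi$ to $G_{K_v}$. If $w\nmid p$ is nonsplit in $K/\bQ$, then $w$ splits completely in $K_\infty/K$ (since complex conjugation stabilizes $G_{K_w}$ while inverting $\Psi$, forcing $\Psi|_{G_{K_w}}$ to land in the trivial subgroup of the torsion-free $\Gamma$); hence
$$\mathcal{H}_w^{\rm ur}\cong\Hom_{\bZ_p}(\Lambda,\,\h^1_{\rm ur}(K_w,E[p^\infty])),$$
and since $\h^1_{\rm ur}(K_w,E[p^\infty])$ has order $c_w^{(p)}(E/K)$ by \cite[Lem.~9.1]{skinner-zhang} (already invoked in $(\ref{eq:tam-not-p})$), its Pontryagin dual is isomorphic as a $\Lambda$-module to $\Lambda/p^{{\rm ord}_p c_w(E/K)}$, contributing $\lambda=0$ and $\mu={\rm ord}_p c_w(E/K)$. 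For the remaining places -- namely $\pp$ and the split primes $w\mid N$ not in $\Sigma$ -- the character $\Psi$ is nontrivial on the decomposition group at $v$ ($\Psi|_{I_\pp}$ surjects onto $\Gamma$, while $\Psi(\Frob_w)\neq 1$ for finite split $w$), and a direct computation shows that $(\mathcal{H}_v^{\rm ur})^\vee$ is pseudo-null, contributing $0$ to both invariants.

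Finally, to upgrade the a priori inequality $\mu(Q^\vee)\leq\sum_v\mu((\mathcal{H}_v^{\rm ur})^\vee)$ to an equality, one invokes Poitou--Tate global duality to identify the cokernel of the localization map $\mathfrak{Sel}_{\rm Gr}^\Sigma\to\bigoplus_v\mathcal{H}_v^{\rm ur}$ with a subquotient of a strict Selmer group attached to the Tate dual $\Hom(M,\mu_{p^\infty})$, which is pseudo-null under the standing torsion hypothesis on $X_{\rm Gr}^\Sigma$. Combining all contributions produces the two stated formulas, the only nontrivial terms being those from nonsplit primes $w$ with $p\mid c_w(E/K)$. The main obstacle will be this Poitou--Tate step: the place-by-place invariant computations are routine anticyclotomic manipulations, but ruling out a spurious $\mu$-contribution from the cokernel of the localization map requires a careful global duality argument in the style of \cite{greenberg-cetraro}.
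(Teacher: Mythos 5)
Your argument follows essentially the same route as the paper's: analyze the exact sequence $(\ref{eq:defs})$, show that the nonsplit primes $w$ contribute $\mu={\rm ord}_p(c_w(E/K))$ and $\lambda=0$ (via $K_\infty/K$ splitting completely at such $w$, as you correctly argue using the dihedral action), dismiss the contributions at $\pp$ and at split primes, and control the cokernel of the localization map. Two remarks. First, the paper disposes of the $\pp$ and split contributions by showing that $\mathcal{H}_\pp^{\rm ur}$ and $\mathcal{H}_v^{\rm ur}$ (for split $v\nmid p$) vanish \emph{outright} rather than just being pseudo-null: $M^{I_\pp}=0$ kills the former, and injectivity of $H^1(K_v,M)\to H^1(I_v,M)$ (by \cite[Rem.~3.1]{pollack-weston}) kills the latter. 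This is marginally cleaner than what you propose, although pseudo-nullity would suffice. Second, the Poitou--Tate step you flag as the ``main obstacle'' is a genuine gap in your write-up as it stands, but it is a gap of citation rather than of idea: the paper invokes \cite[Prop.~A.2]{pollack-weston}, which establishes the needed surjectivity of $\mathfrak{Sel}_{\rm Gr}^\Sigma(K_\infty,E[p^\infty])\rightarrow\prod_{w\,{\rm nonsplit}}\mathcal{H}_w^{\rm ur}$. Your outline of identifying the cokernel with a subquotient of the strict Selmer group of the Tate dual, and arguing that the torsion hypothesis forces this to be small, is precisely the global duality mechanism underlying that result, so once you either cite it or carry out the computation in full, your argument coincides with the paper's. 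One further minor point: the paper reduces to $\Sigma=\emptyset$ at the outset (noting torsionness and the invariant relations are insensitive to $\Sigma$), which tidies the bookkeeping slightly compared to carrying $\Sigma$ through the whole argument as you do.
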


\begin{proof}
Since $X_{\rm ac}^\Sigma(E[p^\infty])$ is a quotient of $X_{\rm Gr}^\Sigma(E[p^\infty])$,
the first claim of the proposition is clear. Also, note that $X_{\rm Gr}^\Sigma(E[p^\infty])$ is $\Lambda$-torsion
for some $\Sigma$ if and only if it is $\Lambda$-torsion
for any finite set of primes $\Sigma$. Therefore to establish the claimed relations between Iwasawa invariants,
it suffices to consider primitive Selmer groups, i.e. $\Sigma=\emptyset$.

For primes $v\nmid p$ which are split in $K$, it is easy to see that
the restriction map $H^1(K_v,M)\rightarrow H^1(I_v,M)$
is injective (see \cite[Rem.~3.1]{pollack-weston}), and so $\mathcal{H}_v^{\rm ur}$ vanishes. 
Since $M^{I_\pp}=\{0\}$, the term $\mathcal{H}_\pp^{\rm ur}$ also vanishes, and
the exact sequence $(\ref{eq:defs})$ thus reduces to
\begin{equation}\label{eq:defs-}
0\longrightarrow{\rm Sel}_\pp(K_\infty,E[p^\infty])\longrightarrow\mathfrak{Sel}_{\rm Gr}(K_\infty,E[p^\infty])
\longrightarrow\prod_{\substack{w\;{\rm nonsplit}}}\mathcal{H}_w^{\rm ur}.
\end{equation}
Now, a straightforward modification of the argument in \cite[Lem.~3.4]{pollack-weston} shows
that
\[
\mathcal{H}_w^{\rm ur}\simeq(\bZ_p/p^{t_E(w)}\bZ_p)\otimes\Lambda^*,
\]
where $t_E(w):={\rm ord}_p(c^{}_w(E/K))$ is the $p$-exponent of the Tamagawa number of
$E$ at $w$, and $\Lambda^*$ is the Pontrjagin dual of $\Lambda$. In particular, $\mathcal{H}_w^{\rm ur}$
is $\Lambda$-torsion, with $\lambda(\mathcal{H}_w^{\rm ur})=0$ and $\mu(\mathcal{H}_w^{\rm ur})={\rm ord}_p(c_w(E/K))$.
Since the rightmost arrow in $(\ref{eq:defs-})$ is surjective by \cite[Prop.~A.2]{pollack-weston}, 
taking characteristic ideals in $(\ref{eq:defs-})$ the result follows.
\end{proof}

For the rest of this section, assume that 
$E$ has ordinary reduction at $p$, so
that the associated newform $f\in S_2(\Gamma_0(N))$
is $p$-ordinary. Let $\mathbf{f}\in\cR[[q]]$
be the Hida family associated with $f$, let $\wp\subseteq\cR$ be the kernel
of the arithmetic map $\phi\in\mathcal{X}_{\cR}^a$ such that $\mathbf{f}_\phi$
is either $f$ itself (if $p\mid N$) or
the ordinary $p$-stabilization of $f$ (if $p\nmid N$), and set $\widetilde{\wp}:=\wp\Lambda_\cR\subseteq\Lambda_\cR$.
Since we assume that $\bar{\rho}_{E,p}$
is irreducible, so is $\bar{\rho}_{\mathbf{f}}$.

\begin{thm}\label{thm:control-hida}
Let $S_p$ be the places of $K$ above $p$, and assume that
$\Sigma\cup S_p$ contains all places of $K$ at which $T$ is ramified.
Then 
there is a canonical isomorphism
\[
X_{\rm Gr}^\Sigma(E[p^\infty])\simeq X_{\rm Gr}^\Sigma(A_{\mathbf{f}})/\widetilde{\wp} X_{\rm Gr}^\Sigma(A_{\mathbf{f}}).
\]
\end{thm}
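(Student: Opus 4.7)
The plan is a snake-lemma argument anchored in the specialization $M_{\mathbf{f}}[\widetilde{\wp}] \simeq M$, which combines the identification $T_{\mathbf{f}}/\wp T_{\mathbf{f}} \simeq T$ (valid whether $\mathbf{f}_\phi = f$ or its ordinary $p$-stabilization) with $\Lambda_{\cR}/\widetilde{\wp} \simeq \Lambda$. Pontryagin duality converts the asserted isomorphism into the equivalent statement
\[
\mathfrak{Sel}_{\rm Gr}^\Sigma(K_\infty, E[p^\infty]) \simeq \mathfrak{Sel}_{\rm Gr}^\Sigma(K_\infty, A_{\mathbf{f}})[\widetilde{\wp}],
\]
so it suffices to produce a natural identification of these subgroups of the respective first Galois cohomology groups.

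Since $\Sigma \cup S_p$ contains every place where $T$, and hence $T_{\mathbf{f}}$ (the character $\Psi$ being ramified only above $p$), ramifies, both Selmer groups can be written as kernels of maps out of $H^1(G_{K,\Sigma\cup S_p},-)$ whose only nontrivial local constraint is unramifiedness at $\pp$. Picking a regular generator $\varpi$ of $\widetilde{\wp}$ (the general case being reducible to this one via a Koszul resolution) and using that $M_{\mathbf{f}}$ is $\varpi$-divisible as a cofree $\Lambda_{\cR}$-module, one obtains the short exact sequence of $G_K$-modules
\[
0 \to M \to M_{\mathbf{f}} \xrightarrow{\varpi} M_{\mathbf{f}} \to 0.
\]
The associated long exact cohomology sequence over $G_{K,\Sigma\cup S_p}$, combined with the vanishing $H^0(G_{K,\Sigma\cup S_p}, M_{\mathbf{f}}) = 0$ (a standard consequence of the residual irreducibility of $\bar{\rho}_{\mathbf{f}}$ together with the infinite twist $\Psi^{-1}$), yields
\[
H^1(G_{K,\Sigma\cup S_p}, M) \xrightarrow{\sim} H^1(G_{K,\Sigma\cup S_p}, M_{\mathbf{f}})[\widetilde{\wp}].
\]
Running the analogous argument at the local place $\pp$ and assembling both sides into a commutative square with vertical isomorphisms, a five-lemma diagram chase then identifies the Selmer groups as required.

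The main obstacle is the local vanishing $H^0(I_\pp, M_{\mathbf{f}}) = 0$, needed to match the unramified local conditions after reduction modulo $\widetilde{\wp}$. This requires care because the ordinary filtration $0 \to T_{\mathbf{f}}^+ \to T_{\mathbf{f}} \to T_{\mathbf{f}}^- \to 0$ endows $T_{\mathbf{f}}$ with an $I_\pp$-fixed unramified quotient $T_{\mathbf{f}}^-$, which \emph{a priori} would contribute nontrivial inertia invariants. The resolution is that $\pp$ is totally ramified in the anticyclotomic $\bZ_p$-extension, so $\Psi|_{I_\pp}$ has open image in $\Gamma$, and the anticyclotomic twist on $T_{\mathbf{f}}^- \otimes_{\cR} \Lambda_{\cR}^*$ possesses no $I_\pp$-fixed vectors. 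Making this rigorous, and simultaneously treating the case when $\widetilde{\wp}$ is not principal, constitutes the real substance of the argument.
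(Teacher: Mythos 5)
Your strategy---reduce via Pontryagin duality to comparing Selmer subgroups of global cohomology, then match global and local $H^1$'s using an $H^0$-vanishing at $\pp$---is the same one the paper uses, following \cite[Prop.~3.7]{SU} and Ochiai. Two cosmetic differences: where you set up a multiplication-by-$\varpi$ long exact sequence for a regular generator and wave at a Koszul resolution for general $\widetilde{\wp}$, the paper works directly with $0 \to \Lambda_\cR^*[\widetilde{\wp}] \to \Lambda_\cR^* \to \widetilde{\wp}\Lambda_\cR^* \to 0$ tensored with $T_{\mathbf{f}}$, which requires no assumption on the ideal; and it cites \cite[Prop.~3.7]{SU} for the identity $H^1(G_{K,S},M_{\mathbf{f}}[\widetilde{\wp}]) = H^1(G_{K,S},M_{\mathbf{f}})[\widetilde{\wp}]$ rather than re-deriving it from $H^0$-vanishing.

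The substantive problem is your justification of $H^0(I_\pp, M_{\mathbf{f}}) = 0$. You assert that since $\Psi|_{I_\pp}$ has open image in $\Gamma$, the anticyclotomic twist leaves $T_{\mathbf{f}}^- \otimes_\cR \Lambda_\cR^*$ with no $I_\pp$-fixed vectors. That is false. Since $I_\pp$ acts trivially on the unramified quotient $T_{\mathbf{f}}^-$ and through all of $\Gamma$ on $\Lambda_\cR^*$, the invariants are $T_{\mathbf{f}}^- \otimes_\cR (\Lambda_\cR^*)[\gamma-1]$, and $(\Lambda_\cR^*)[\gamma-1]$ is the Pontryagin dual of $\Lambda_\cR/(\gamma-1)\Lambda_\cR \cong \cR$, which is certainly nonzero. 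So the unramified quotient does contribute nontrivial inertia invariants after the twist, and the claimed vanishing cannot be read off from the quotient alone; one must use the full two-dimensional structure (for instance, elements of $I_\pp$ in the kernel of $\Psi$ on which the character of $T_{\mathbf{f}}^+$ is a nontrivial root of unity, together with the nontriviality of the ordinary extension class). The paper asserts $H^0(I_\pp, M_{\mathbf{f}}) = \{0\}$ without spelling this out, but it does not rely on the incorrect argument you give, and as written your proof of the key local input does not go through.
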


\begin{proof}
This follows from a slight variation of the arguments proving \cite[Prop.~3.7]{SU}
(see also \cite[Prop.~5.1]{Ochiai-MC}). Since
$M\simeq M_{\mathbf{f}}[\widetilde{\wp}]$,
by Pontrjagin duality it suffices to show that the canonical map
\begin{equation}\label{eq:can}
{\rm Sel}_{\rm Gr}^\Sigma(K_\infty,M_{\mathbf{f}}[\widetilde{\wp}])
\longrightarrow{\rm Sel}_{\rm Gr}^\Sigma(K_\infty,M_{\mathbf{f}})[\widetilde{\wp}]
\end{equation}
is an isomorphism.
Note that our assumption on $S:=\Sigma\cup S_p$ implies that
\begin{equation}\label{eq:useful}
{\rm Sel}_\pp^\Sigma(K_\infty,M_{?})={\rm ker}\biggl\{H^1(G_{K,S},M_?)\overset{{\rm loc}_\pp}
\longrightarrow\frac{H^1(K_\pp,M_?)}{H^1_{\rm Gr}(K_\pp,M_?)}\biggr\},
\end{equation}
where $M_?=M_{\mathbf{f}}[\widetilde{\wp}]$ or $M_{\mathbf{f}}$,
$G_{K,S}$ is the Galois group of the maximal extension of $K$ unramified outside $S$, and
\[
H^1_{\rm Gr}(K_\pp,M_{?}):={\rm ker}
\bigl\{H^1(K_\pp,M_?)\longrightarrow H^1(I_\pp,M_?)\bigr\}.
\]

As shown in the proof of \cite[Prop.~3.7]{SU}
(taking $A=\Lambda_{\cR}$ and $\mathfrak{a}=\widetilde{\wp}$ in \emph{loc.cit.}),
we have $H^1(G_{K,S},M_{\mathbf{f}}[\widetilde{\wp}])=H^1(G_{K,S},M_{\mathbf{f}})[\widetilde{\wp}]$.
On the other hand, using that $G_{K_\pp}/I_\pp$ 
has cohomological dimension one, we immediately see that
\[
H^1(K_\pp,M_?)/H^1_{\rm Gr}(K_\pp,M_?)
\simeq H^1(I_\pp,M_?)^{G_{K_\pp}},
\]
From the long exact sequence in $I_\pp$-cohomology associated with
$0\rightarrow\Lambda_{\cR}^*[\widetilde{\wp}]\rightarrow\Lambda_{\cR}^*\rightarrow\widetilde{\wp}\Lambda_{\cR}^*\rightarrow 0$
tensored with $T_{\mathbf{f}}$, we obtain
\[
(M_{\mathbf{f}}^{I_\pp}/(T_{\mathbf{f}}\otimes_{\cR}\widetilde{\wp}\Lambda_{\cR}^*)^{I_\pp})^{G_{K_\pp}}\simeq{\rm ker}
\bigl\{H^1(I_\pp,M_{\mathbf{f}}[\widetilde{\wp}])^{G_{K_\pp}}\longrightarrow H^1(I_\pp,M_{\mathbf{f}})^{G_{K_\pp}}\bigr\}.
\]
Since $H^0(I_\pp,M_{\mathbf{f}})=\{0\}$, we thus have a commutative diagram
\[
\xymatrix{
H^1(G_{K,S},M_{\mathbf{f}}[\widetilde{\wp}])\ar[rr]^-{{\rm loc}_\pp}\ar[d]^{\simeq} && H^1(K_\pp,M_{\mathbf{f}}[\widetilde{\wp}])/H^1_{\rm Gr}(K_\pp,M_{\mathbf{f}}[\widetilde{\wp}])\ar[d]\\
H^1(G_{K,S},M_{\mathbf{f}})[\widetilde{\wp}]\ar[rr]^-{{\rm loc}_\pp} && H^1(K_\pp,M_{\mathbf{f}})/H^1_{\rm Gr}(K_\pp,M_{\mathbf{f}})
}
\]
in which the right vertical map is injective. In light of $(\ref{eq:useful})$, the result follows.
\end{proof}

\section{A $p$-adic Waldspurger formula}\label{sec:padicL}

Let $E$, $p$, and $K$ be an introduced in $\S\ref{sec:defs}$.
In this section, we assume in addition
that $K$ satisfies the following Heegner hypothesis relative to the
square-free integer $N$:
\begin{equation}\label{eq:Heeg-hyp}
\textrm{every prime factor of $N$ is either split or ramified in $K$.}\tag{Heeg}
\end{equation}

\subsubsection*{Anticyclotomic $p$-adic $L$-function}

Let $f=\sum_{n=1}^\infty a_nq^n\in S_2(\Gamma_0(N))$
be the newform associated with $E$.
Denote by $R_0$ the completion of the ring of integers
of the maximal unramified extension of $\bQ_p$, 
and set $\Lambda_{R_0}:=\Lambda\hat{\otimes}_{\bZ_p}R_0$, where
as before $\Lambda=\bZ_p[[\Gamma]]$ is the anticyclotomic Iwasawa algebra. 

\begin{thm}\label{thm:L-bdp}
There exists a $p$-adic $L$-function
$L_p^{}(f)\in\Lambda_{R_0}$ such that if $\hat{\phi}:\Gamma\rightarrow\bC_p^\times$ is the $p$-adic avatar
of an unramified anticyclotomic Hecke character $\phi$
with infinity type $(-n,n)$ with $n>0$, then
\begin{align*}
L_{p}^{}(f,\hat\phi)&=
\Gamma(n)\Gamma(n+1)
\cdot(1-a_pp^{-1}\phi(\pp)+\varepsilon_p\phi^2(\pp))^2
\cdot\Omega_p^{4n}\cdot\frac{L(f/K,\phi,1)}{\pi^{2n+1}\cdot\Omega_K^{4n}},
\end{align*}
where 
$\varepsilon_p=p^{-1}$ if $p\nmid N$ and $\varepsilon_p=0$ otherwise, and
$\Omega_p\in\unr^\times$ and $\Omega_K\in\bC^\times$  are CM periods. 
\end{thm}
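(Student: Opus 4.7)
The plan is to define $L_p(f) \in \Lambda_{R_0}$ by appealing directly to previously constructed anticyclotomic $p$-adic $L$-functions in the literature, splitting into cases according to the reduction type of $E$ at $p$. Under the Heegner hypothesis $(\ref{eq:Heeg-hyp})$, factor $N = N^+ N^-$, where $N^+$ (resp. $N^-$) is the product of prime factors of $N$ that split (resp. ramify) in $K$. Since the target of the interpolation is a square (as the interpolated quantities are square roots of central $L$-values), the construction must extract such a square root, which is possible precisely because the sign of the functional equation of $L(f/K, \phi, s)$ at the relevant characters is $+1$ by $(\ref{eq:Heeg-hyp})$ and the infinity type hypothesis.

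First suppose $p \nmid N$, so $E$ has good ordinary reduction at $p$. If $N^- = 1$, take $L_p(f)$ to be the anticyclotomic $p$-adic $L$-function of Bertolini--Darmon--Prasanna \cite{bdp1}; if $N^- \neq 1$, take its extension by Brooks \cite{brooks} to the indefinite Shimura curve associated to the quaternion algebra over $\bQ$ ramified precisely at the primes dividing $N^-$. In both situations, the $p$-adic $L$-function is defined by evaluating a $p$-depleted family of $p$-adic modular forms at a CM point using Serre--Tate coordinates, and its interpolation property delivers the stated formula with Euler factor $(1 - a_p p^{-1}\phi(\pp) + p^{-1}\phi^2(\pp))^2$, matching the claim with $\varepsilon_p = p^{-1}$. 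The square of this factor reflects the comparison between $f$ and its ordinary $p$-stabilisation used in the construction.

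Now suppose $p \mid N$, so $E$ has multiplicative reduction at $p$, $f$ is itself new at $p$, and $a_p \in \{\pm 1\}$. The direct BDP/Brooks approach does not apply, as there is no nontrivial $p$-stabilisation step. Instead, take $L_p(f)$ to be the $p$-adic $L$-function constructed in \cite{cas-split}, obtained via Hida-theoretic interpolation: one passes to the Hida family $\mathbf{f} \in \cR\pwseries{q}$ through $f$, applies the Brooks-type construction at each arithmetic point of weight $> 2$ on $\mathbf{f}$ (where the form is a $p$-stabilisation and the BDP formalism applies), and specialises at the arithmetic point corresponding to $f$. Because the relevant $p$-depletion operator applied to $f$ now contributes only the factor $(1 - a_p p^{-1} \phi(\pp))$ rather than $(1 - a_p p^{-1} \phi(\pp) + p^{-1}\phi^2(\pp))$, the interpolation formula obtained is exactly the claim with $\varepsilon_p = 0$.

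The main obstacle, entirely handled in \cite{cas-split}, is the multiplicative case: one must verify that the Hida-theoretic specialisation to weight 2 is compatible with the BDP-style interpolation formula at each higher-weight arithmetic point, and that the resulting Euler factor at $\pp$ correctly collapses to $(1 - a_p p^{-1}\phi(\pp))^2$ upon specialisation (rather than the good-reduction factor). Once this is in hand, the two cases are uniformly expressed by the convention $\varepsilon_p = p^{-1}$ or $0$ according to whether $p \nmid N$ or $p \mid N$, producing the single element $L_p(f) \in \Lambda_{R_0}$ with the interpolation property asserted in the theorem.
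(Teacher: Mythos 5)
Your proposal has a genuine gap in the treatment of the ramified primes, and this is not a cosmetic issue: it is precisely the point the paper is designed to sidestep.

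You factor $N = N^+ N^-$ with $N^-$ the product of primes \emph{ramified} in $K$, and then invoke Brooks' extension of \cite{bdp1} to the Shimura curve attached to ``the quaternion algebra over $\bQ$ ramified precisely at the primes dividing $N^-$.'' This misplaces what Brooks' construction is for. In that framework the quaternion algebra is ramified at the primes of $N$ that are \emph{inert} in $K$ (and there must be an even number of them so the algebra is indefinite). Primes ramified in $K$ do not push you off the classical modular curve: under $(\ref{eq:Heeg-hyp})$, one can still choose an ideal $\mathfrak{N}\subseteq\cO_K$ with $\cO_K/\mathfrak{N}\simeq\bZ/N\bZ$ (taking the unique prime $\mathfrak{l}$ above each ramified $\ell\mid N$), so the CM points $(A,A[\mathfrak{N}])$ and the resulting Heegner cycle $\Delta$ already live on $X_0(N)$, as in the proof of Theorem~\ref{thm:bdp}. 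This is the content of the remark in the Introduction that the method ``only requires the use of classical modular parametrizations of $E$.'' Worse, your construction can simply fail to make sense: in the proof of Theorem~A the field $K$ is chosen so that exactly one prime $q$ of $N$ is ramified in $K$, so your $N^- = q$ has an odd number of prime factors and there is no indefinite quaternion algebra with that discriminant. The paper instead sets $L_p(f):={\rm Tw}_{\psi^{-1}}(\mathscr{L}_{\pp,\psi}(f))$, where $\mathscr{L}_{\pp,\psi}(f)$ is the one-variable anticyclotomic $p$-adic $L$-function from \cite[Def.~3.7]{cas-hsieh1} (constructed under the generalized Heegner hypothesis on $X_0(N)$, with no quaternion algebras), and reads off the interpolation formula from \cite[Thm.~3.8]{cas-hsieh1}.

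Two smaller points. First, you write that $p\nmid N$ forces good \emph{ordinary} reduction, but the theorem (and the underlying BDP construction) make no ordinarity assumption; the supersingular case is included, and the paper's proof of Theorem~A does not distinguish between the two when $p\nmid N$. Second, for $p\mid N$ the paper's proof is not the Hida-theoretic specialization you describe: it observes that the construction in \cite[\S3.3]{cas-hsieh1} extends directly to the case $p\mid N$, with the $p$-adic multiplier $e_\pp(f,\phi)$ collapsing to $1-a_pp^{-1}\phi(\pp)$ for unramified $\phi$ (this is the content of \cite[Thm.~2.10]{cas-split}). The Hida family enters the paper later, in Section~\ref{sec:MC}, to prove the \emph{main conjecture} in the multiplicative case, not to define the one-variable $L_p(f)$ of Theorem~\ref{thm:L-bdp}.
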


\begin{proof}
Let $\psi$ be an anticyclotomic Hecke character of $K$ of infinity type $(1,-1)$
and conductor prime to $p$, let $\mathscr{L}_{\pp,\psi}(f)\in\Lambda_{\unr}^{}$ be as
in \cite[Def.~3.7]{cas-hsieh1}, and set
\[
L_{p}(f):={\rm Tw}_{\psi^{-1}}(\mathscr{L}_{\pp,\psi}(f)),
\]
where ${\rm Tw}_{\psi^{-1}}:\Lambda_{\unr}^{}\rightarrow\Lambda_{\unr}^{}$
is the $\unr$-linear isomorphism given by $\gamma\mapsto\psi^{-1}(\gamma)\gamma$
for $\gamma\in\Gamma$. If $p\nmid N$, the interpolation property for $L_p(f)$
is a reformulation of \cite[Thm.~3.8]{cas-hsieh1}.
Since the construction in \cite[\S{3.3}]{cas-hsieh1}
readily extends to the case $p\mid N$, with the $p$-adic multiplier $e_\pp(f,\phi)$
in \emph{loc.cit.} reducing to $1-a_pp^{-1}\phi(\pp)$ for unramified $\phi$
(\emph{cf.} \cite[Thm.~2.10]{cas-split}), the result follows.
\end{proof}




If $\Sigma$ is any finite set of place of $K$ not lying above $p$, we define the ``$\Sigma$-imprimitive''
$p$-adic $L$-function $L_p^\Sigma(f)$ by
\begin{equation}\label{eq:def-imp}
L_p^\Sigma(f):=L_p(f)\times\prod_{w\in\Sigma}P_w(\epsilon\Psi^{-1}(\gamma_w))\in\Lambda_{R_0},
\end{equation}
where $P_w(X):={\rm det}(1-X\cdot{\rm Frob}_w\vert V^{I_w})$,
$\epsilon:G_K\rightarrow\bZ_p^\times$ is the $p$-adic cyclotomic character,
${\rm Frob}_w\in G_K$ is a geometric Frobenius element at $w$, and
$\gamma_w$ is the image of ${\rm Frob}_w$ in $\Gamma$.

\subsubsection*{$p$-adic Waldspurger formula}

We will have use for the following 
formula for the value at the trivial character $\mathds{1}$
of the $p$-adic $L$-function of Theorem~\ref{thm:L-bdp}.

Recall that $E/\bQ$ is assumed to be semistable. From now on, we shall also
assume that $E$ is an optimal quotient of the new part of $J_0(N)={\rm Jac}(X_0(N))$
in the sense of \cite[\S{2}]{mazur-isogenies}, and fix a corresponding modular parametrization
\[
\pi:X_0(N)\longrightarrow E
\]
sending the cusp $\infty$ to the origin of $E$. If
$\omega_E$ a N\'eron differential on $E$, and $\omega_f=\sum a_nq^n\frac{dq}{q}$
is the one-form on $J_0(N)$ associated with $f$, then
\begin{equation}\label{eq:manin}
\pi^*(\omega_E)=c\cdot\omega_f,
\end{equation}
for some $c\in\bZ_{(p)}^\times$ (see \cite[Cor.~4.1]{mazur-isogenies}).

\begin{thm}\label{thm:bdp}
The following equality holds up to a $p$-adic unit:
\[
L_p(f,\mathds{1})=
(1-a_pp^{-1}+\varepsilon_p)^2\cdot(\log_{\omega_E}P_K)^2,
\]
where $\varepsilon_p=p^{-1}$ if $p\nmid N$ and $\varepsilon_p=0$ otherwise, and
$P_K\in E(K)$ is a Heegner point.
\end{thm}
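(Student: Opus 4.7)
The plan is to evaluate $L_p(f)$ at the trivial character, which lies outside the interpolation range of Theorem~\ref{thm:L-bdp} (reserved for infinity types $(-n,n)$ with $n>0$). The governing formula for such ``exceptional'' values is the $p$-adic Waldspurger formula of Bertolini--Darmon--Prasanna \cite{bdp1}, together with its extensions to Shimura curves \cite{brooks} and to the multiplicative reduction case \cite{cas-split}.

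First, I would unwind the twist $L_p(f) = {\rm Tw}_{\psi^{-1}}(\mathscr{L}_{\pp,\psi}(f))$. Since ${\rm Tw}_{\psi^{-1}}$ sends $\gamma \mapsto \psi^{-1}(\gamma)\gamma$, for any character $\hat\chi$ of $\Gamma$ one has $L_p(f,\hat\chi) = \mathscr{L}_{\pp,\psi}(f,\psi^{-1}\hat\chi)$. In particular,
\[
L_p(f,\mathds{1}) \;=\; \mathscr{L}_{\pp,\psi}(f,\psi^{-1}),
\]
and $\psi^{-1}$ has infinity type $(-1,1)$, which lies at the boundary of the interpolation range of $\mathscr{L}_{\pp,\psi}(f)$ and is precisely the ``exceptional'' value at which the Waldspurger formula applies in place of the interpolation formula.

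Next, I would invoke the Waldspurger formula in each case. If $p \nmid N$, then under \eqref{eq:Heeg-hyp} the formula of \cite{bdp1} (in the normalization of \cite[Thm.~3.8]{cas-hsieh1}) yields, up to a $p$-adic unit that absorbs the CM periods $\Omega_p$ and $\Omega_K$,
\[
\mathscr{L}_{\pp,\psi}(f,\psi^{-1}) \;\doteq\; \bigl(1 - a_p p^{-1} + p^{-1}\bigr)^2 \cdot \bigl(\log_{\omega_f} P\bigr)^2,
\]
where $P$ is the canonical Heegner point on $X_0(N)$ relative to $K$. The existence of a nonsplit prime $q \mid N$ forces $N^- = 1$, so no quaternionic parametrization is needed and $P_K = \pi(P)$. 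If $p \mid N$, the corresponding formula of \cite{cas-split} gives the same expression with $(1 - a_p p^{-1} + p^{-1})$ replaced by $(1 - a_p p^{-1})$, matching $\varepsilon_p = 0$ in this case.

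Finally, I would pass from $\log_{\omega_f}$ to $\log_{\omega_E}$ via the Manin-constant relation \eqref{eq:manin}: since $\pi^*\omega_E = c\,\omega_f$ with $c \in \bZ_{(p)}^\times$, one has $\log_{\omega_f} P = c \cdot \log_{\omega_E} \pi(P) = c \cdot \log_{\omega_E} P_K$, and the factor $c$ is absorbed into the ``up to a $p$-adic unit'' assertion. The main obstacle is bookkeeping: one must track several normalizations (the CM periods, the transcendental factors in $\mathscr{L}_{\pp,\psi}(f)$, and the distinction between the canonical Heegner point on $X_0(N)$ and its image $P_K$ in $E(K)\otimes \bZ_p$) to verify that they indeed combine into a $p$-adic unit. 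No single step is deep; the care lies in matching conventions across \cite{bdp1,brooks,cas-hsieh1,cas-split}.
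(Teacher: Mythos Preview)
Your proposal is correct and follows essentially the same approach as the paper: invoke the $p$-adic Waldspurger formula of \cite{bdp1,cas-hsieh1} (for $p\nmid N$) and \cite{cas-split} (for $p\mid N$), then use the Manin-constant relation \eqref{eq:manin} to pass from $\omega_f$ to $\omega_E$. The paper additionally makes explicit two small steps you elide: the identification of the Abel--Jacobi map with the formal group logarithm via \cite[Ex.~3.10.1]{BK}, and the fact that the Heegner point $P_K\in E(K)$ is the trace $\sum_{\sigma\in\Gal(H/K)}\pi(\Delta^\sigma)$ from the Hilbert class field rather than simply $\pi(P)$; your remark ``$N^-=1$ because of the nonsplit prime'' is also slightly off (it is the hypothesis \eqref{eq:Heeg-hyp} itself that rules out inert primes), but none of this affects the argument.
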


\begin{proof}
This follows from \cite[Thm.~5.13]{bdp1} and \cite[Thm.~4.9]{cas-hsieh1} in the case $p\nmid N$
and \cite[Thm.~2.11]{cas-split} in the case $p\mid N$. Indeed, in our case,
the generalized Heegner cycles $\Delta$ constructed in either of these references
are of the form
\[
\Delta=[(A,A[\mathfrak{N}])-(\infty)]\in J_0(N)(H),
\]
where $H$ is the Hilbert class field of $K$, and $(A,A[\mathfrak{N}])$
is a CM elliptic curve equipped with a cyclic $N$-isogeny.
Letting $F$ denote the $p$-adic completion of $H$, the aforementioned
references then yield the equality
\begin{equation}\label{eq:BDP-formula}
L_p(f,\mathds{1})=
(1-a_pp^{-1}+\varepsilon_p)^2
\cdot\Biggl(\sum_{\sigma\in{\rm Gal}(H/K)}{\rm AJ}_{F}(\Delta^\sigma)(\omega_f)\Biggr)^2.
\end{equation}
By \cite[Ex.~3.10.1]{BK}, the $p$-adic Abel--Jacobi map appearing in $(\ref{eq:BDP-formula})$
is related to the formal group logarithm on $J_0(N)$ by the formula
\[
{\rm AJ}_{F}(\Delta)(\omega_f)=\log_{\omega_f}(\Delta),
\]
and by $(\ref{eq:manin})$ we have the equalities up to a $p$-adic unit:
\[
\log_{\omega_f}(\Delta)=\log_{\pi^*(\omega_E)}(\pi(\Delta))=\log_{\omega_E}(\pi(\Delta))
\]
Thus, taking $P_K:=\sum_{\sigma\in{\rm Gal}(H/K)}\pi(\Delta^\sigma)\in E(K)$, the result follows.
\end{proof}

\section{Main Conjectures}\label{sec:MC}

Let $\mathbf{f}\in\cR[[q]]$ be an ordinary $\cR$-adic cusp eigenform of tame level $M$
as in Section~\ref{sec:Selmer} (so $p\nmid M$),
with associated residual representation $\bar{\rho}_{\mathbf{f}}$.
Letting $D_p\subseteq G_{\bQ}$ be a fixed decomposition group at $p$,
 we say that $\bar{\rho}_{\mathbf{f}}$ is \emph{$p$-distinguished}
if the semisimplication of $\bar{\rho}_{\mathbf{f}}\vert_{D_p}$ is the direct sum of
two distinct characters.

Let $K$ be an imaginary quadratic field
in which $p=\pp\overline{\pp}$ splits,
and which satisfies hypothesis (\ref{eq:Heeg-hyp})
from Section~\ref{sec:padicL} relative to $M$.

For the next statement, note that for any eigenform $f$ defined over a finite extension $L/\bQ_p$
with associated Galois representation $V_f$, we may define the Selmer group 
$X_{\rm Gr}^\Sigma(A_f)$ as in $\S\ref{sec:control}$, replacing $T=T_pE$ by a fixed $G_{\bQ}$-stable
$\cO_L$-lattice in $V_f$, and setting $A_f:=V_f/T_f$.


\begin{thm}\label{thm:cas-BF}
Let $f\in S_2(\Gamma_0(M))$ be a $p$-ordinary newform 
of level $M$, with $p\nmid M$,
and let $\bar\rho_f$ be the associated residual representation. Assume that:
\begin{itemize}
\item{} $M$ is square-free;
\item{} $\bar{\rho}_f$ is ramified at every prime $q\mid M$ which is nonsplit in $K$,
and there is at least one such prime;
\item{} $\bar{\rho}_{f}\vert_{G_K}$ is irreducible.
\end{itemize}
If $\Sigma$ is any finite set of prime not lying above $p$, then
$X_{\rm Gr}^\Sigma(A_{f})$
is $\Lambda$-torsion, and
\[
Ch_{\Lambda}(X^\Sigma_{\rm Gr}(A_{f}))\Lambda_{R_0}=
(L_p^\Sigma(f)),
\]
where $L_p^{\Sigma}(f)$ is as in $(\ref{eq:def-imp})$.
\end{thm}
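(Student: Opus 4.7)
The plan is to deduce the statement from the Iwasawa--Greenberg main conjecture for the \emph{strict} Selmer group $X_{\rm ac}^\Sigma(A_f)$, which under the hypotheses of Theorem~\ref{thm:cas-BF} is available by combining two results from the literature. The divisibility
\[
Ch_\Lambda(X_{\rm ac}^\Sigma(A_f))\,\Lambda_{R_0}\subseteq(L_p^\Sigma(f))
\]
is Xin~Wan's theorem in \cite{wanIMC}, proved by the Eisenstein-ideal method on the unitary group $\mathrm{U}(3,1)$, while the opposite divisibility is the main result of \cite{cas-BF}, obtained from the Beilinson--Flach Euler system of Lei--Loeffler--Zerbes together with the explicit reciprocity law relating it to the Bertolini--Darmon--Prasanna $p$-adic $L$-function of Theorem~\ref{thm:L-bdp}. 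Both statements apply under the hypotheses of Theorem~\ref{thm:cas-BF}, yielding
\[
Ch_\Lambda(X_{\rm ac}^\Sigma(A_f))\,\Lambda_{R_0}=(L_p^\Sigma(f)).
\]

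To pass from the strict to the Greenberg Selmer group I would invoke Proposition~\ref{prop:comparison}, whose proof adapts verbatim from $E$ to $A_f$: it gives equal $\lambda$-invariants, while the difference of $\mu$-invariants is $\sum_{w\text{ nonsplit}}\mathrm{ord}_p(c_w(A_f/K))$. By the Heegner hypothesis \eqref{eq:Heeg-hyp}, every prime $q\mid M$ that is nonsplit in $K$ is ramified in $K$, and by assumption $\bar\rho_f$ is ramified at every such $q$; a standard argument (compare \cite[Lem.~9.1]{skinner-zhang}) then forces the $p$-part of $c_w(A_f/K)$ to be trivial at every nonsplit $w$. Hence the two $\mu$-invariants coincide, and the equality above transfers verbatim to $X_{\rm Gr}^\Sigma(A_f)$. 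The $\Lambda$-torsion statement is automatic from either divisibility together with $L_p^\Sigma(f)\neq 0$, itself a consequence of Rohrlich's nonvanishing theorem via the interpolation property of Theorem~\ref{thm:L-bdp}.

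The main obstacle is not any individual step but the bookkeeping of hypotheses: one must verify that the three assumptions of Theorem~\ref{thm:cas-BF} (squarefreeness of $M$, irreducibility of $\bar\rho_f|_{G_K}$, and the existence of a ramified nonsplit prime dividing $M$) are in fact strong enough to apply \emph{both} \cite{wanIMC} and \cite{cas-BF}, which often carry additional technical requirements such as ``big image'', ``$p$-distinguished'', and the nonvanishing of a certain Beilinson--Flach class. Once this compatibility is checked, the theorem is a formal combination of the references above with Proposition~\ref{prop:comparison}.
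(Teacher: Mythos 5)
The paper's own proof is essentially a citation: the $\Sigma=\emptyset$ case ``is the content of \cite[Thm.~3.4]{cas-BF}'' and the general $\Sigma$ follows by the argument of \cite[Thm.~6.1.6]{JSW}. Your proposal instead reconstructs the internal argument of \cite{cas-BF}: combine Wan's Eisenstein-ideal divisibility with the Beilinson--Flach Euler system divisibility to obtain the main conjecture for the strict Selmer group $X_{\rm ac}^\Sigma(A_f)$, then transfer to $X_{\rm Gr}^\Sigma(A_f)$ via Proposition~\ref{prop:comparison} using that the $p$-part of the Tamagawa factor at any nonsplit $w\mid M$ is trivial under the ramification hypothesis. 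This is correct in outline and amounts to unpacking the black box, which is a legitimate alternative; the paper simply outsources that work and only adds the $\Sigma$-imprimitive reduction. Two small slips are worth fixing. First, the $\Lambda$-torsion of $X_{\rm ac}^\Sigma(A_f)$ does \emph{not} follow from ``either divisibility'': if the module were non-torsion then $Ch_\Lambda(X_{\rm ac}^\Sigma(A_f))=0$ and Wan's containment $Ch_\Lambda\Lambda_{R_0}\subseteq(L_p^\Sigma(f))$ would be vacuously true; only the Euler-system containment $(L_p^\Sigma(f))\subseteq Ch_\Lambda\Lambda_{R_0}$ together with $L_p^\Sigma(f)\neq 0$ forces torsion. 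Second, Proposition~\ref{prop:comparison} is stated and proved for $E[p^\infty]$; extending it to $A_f$ requires the analogue of \cite[Lem.~3.4]{pollack-weston} identifying $\mathcal{H}_w^{\rm ur}$ with the $p$-part of the local Tamagawa factor of the motive of $f$ at $w$, which is true but not quite ``verbatim'' without checking the local Euler factor computation for a general $T_f$.
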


\begin{proof}
As in the proof of \cite[Thm.~6.1.6]{JSW},
the result for an arbitrary finite set $\Sigma$ follows immediately from
the case $\Sigma=\emptyset$, which is the content of \cite[Thm.~3.4]{cas-BF}.
(In \cite{cas-BF} it is assumed that $f$
has rational Fourier coefficients
but the extension of the aforementioned
result 
to the setting considered here is immediate.)
\end{proof}

Recall that $\Lambda_\cR$ denotes the anticyclotomic Iwasawa algebra over $\cR$,
and set $\Lambda_{\cR,R_0}:=\Lambda_{\cR}\hat{\otimes}_{\bZ_p}R_0$. For any
$\phi\in\mathcal{X}_{\cR}^a$, set $\widetilde{\wp}_\phi:={\rm ker}(\phi)\Lambda_{\cR,R_0}$.

\begin{thm}\label{thm:2-var-IMC}
Let $\Sigma$ be a finite set of places of $K$ not above $p$. Letting $M$ be the tame
level of $\mathbf{f}$, assume that:
\begin{itemize}
\item{} $M$ is square-free;
\item{} $\bar{\rho}_{\mathbf{f}}$ is ramified at every prime $q\mid M$ which is nonsplit in $K$,
and there is at least one such prime;
\item{} $\bar{\rho}_{\mathbf{f}}\vert_{G_K}$ is irreducible;
\item{} $\bar{\rho}_{\mathbf{f}}$ is $p$-distinguished.
\end{itemize}
Then $X_{\rm Gr}^\Sigma(A_{\mathbf{f}})$
is $\Lambda_{\cR}$-torsion, and
\[
Ch_{\Lambda_\cR}(X_{\rm Gr}^\Sigma(A_{\mathbf{f}}))\Lambda_{\cR,R_0}=
(L_p^\Sigma(\mathbf{f})),
\]
where $L_p^\Sigma(\mathbf{f})\in\Lambda_{\cR,R_0}$ is such that
\begin{equation}\label{eq:sp-property}
L_p^\Sigma(\mathbf{f})\;{\rm mod}\;\widetilde{\wp}_\phi=L_p^{\Sigma}(\mathbf{f}_\phi)
\end{equation}
for all $\phi\in\mathcal{X}_{\cR}^a$.
\end{thm}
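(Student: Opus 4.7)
The plan is to combine a 2-variable divisibility coming from the Eisenstein-congruence techniques of \cite{wanIMC} with the 1-variable main conjecture of Theorem~\ref{thm:cas-BF}, bridged by a control theorem in the Hida variable. First, I would define $L_p^\Sigma(\mathbf{f}) \in \Lambda_{\cR, R_0}$ using the two-variable anticyclotomic $p$-adic $L$-function of \cite{cas-2var}, multiplied by the Euler factors at $\Sigma$ lifted to $\Lambda_{\cR, R_0}$. By construction this element interpolates the $L$-functions $L_p^\Sigma(\mathbf{f}_\phi)$ of Theorem~\ref{thm:L-bdp} as $\phi$ runs over $\mathcal{X}^a_{\cR}$, so that the specialization identity $(\ref{eq:sp-property})$ follows immediately from the corresponding interpolation for the individual Euler factors.

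Next, I would invoke a Hida-family version of \cite{wanIMC} to obtain the divisibility
\[
(L_p^\Sigma(\mathbf{f})) \subseteq Ch_{\Lambda_{\cR}}(X_{\rm Gr}^\Sigma(A_{\mathbf{f}})) \Lambda_{\cR, R_0};
\]
the hypotheses on $\bar{\rho}_{\mathbf{f}}$ (absolute irreducibility on $G_K$, $p$-distinguishedness, and ramification at a nonsplit prime of $K$) are exactly what is needed to run the Eisenstein-congruence argument on $\mathrm{U}(3,1)$ and to ensure that the resulting Eisenstein classes have nontrivial reduction modulo $p$. For the reverse inclusion, I would proceed by specialization: a routine extension of Theorem~\ref{thm:control-hida} to arbitrary arithmetic points gives an isomorphism $X_{\rm Gr}^\Sigma(A_{\mathbf{f}})/\widetilde{\wp}_\phi X_{\rm Gr}^\Sigma(A_{\mathbf{f}}) \simeq X_{\rm Gr}^\Sigma(A_{\mathbf{f}_\phi})$ for each $\phi \in \mathcal{X}^a_{\cR}$. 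Combined with Theorem~\ref{thm:cas-BF} applied to $\mathbf{f}_\phi$, this forces the images of $L_p^\Sigma(\mathbf{f})$ and of a generator of $Ch_{\Lambda_{\cR}}(X_{\rm Gr}^\Sigma(A_{\mathbf{f}})) \Lambda_{\cR, R_0}$ modulo $\widetilde{\wp}_\phi$ to agree up to a unit; since arithmetic specializations are Zariski dense in $\Spec(\Lambda_{\cR, R_0})$, the two principal ideals must coincide.

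The main obstacle is that Theorem~\ref{thm:cas-BF}, as stated, covers only weight-$2$ newforms, whereas the density argument needs the 1-variable equality at a Zariski-dense set of arithmetic specializations of $\mathbf{f}$. This calls for a weight-uniform extension of \cite{cas-BF}; since the Beilinson--Flach Euler system used there is already constructed over the full Hida family and the Euler system input is inherently weight-independent, the extension should be essentially formal, but the analogous weight-varying input to Wan's divisibility would also need to be verified carefully. A secondary technical point is ensuring that $X_{\rm Gr}^\Sigma(A_{\mathbf{f}})$ has no nonzero pseudo-null $\Lambda_{\cR}$-submodules, a property needed so that characteristic ideals behave correctly under specialization at the height-one primes $\widetilde{\wp}_\phi$; this typically follows from the existence of a nontrivial Beilinson--Flach class together with global duality.
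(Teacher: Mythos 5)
Your proposal and the paper agree on the three main ingredients: the construction of $L_p^\Sigma(\mathbf{f})$ from \cite{cas-2var} together with its specialization property, the two-variable divisibility coming from Wan \cite{wanIMC}, and the control theorem in the Hida variable. The divergence is in how the divisibility is upgraded to an equality, and this is where your argument has a genuine gap. You propose to specialize at \emph{all} arithmetic $\phi$ and invoke Zariski density of arithmetic points, which (as you correctly diagnose) forces you to extend Theorem~\ref{thm:cas-BF} to higher-weight specializations of $\mathbf{f}$. That extension is not available from the statement of Theorem~\ref{thm:cas-BF}, and your assessment that it ``should be essentially formal'' glosses over real work (the Beilinson--Flach argument of \cite{cas-BF} and its input from \cite{wanIMC} would both need to be re-examined at arbitrary weight). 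You also raise the vanishing of the maximal pseudo-null submodule of $X_{\rm Gr}^\Sigma(A_{\mathbf{f}})$ as a secondary concern; that is indeed a subtle point when one tries to push a big-ring equality \emph{down} to a specialization, but it is not needed for the direction you actually want here.

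The paper avoids both issues by appealing to \cite[Lem.~3.2]{SU}: since $\Lambda_{\cR,R_0}$ is a normal local domain, once one has the one-sided containment $Ch_{\Lambda_\cR}(X_{\rm Gr}^\Sigma(A_{\mathbf{f}}))\Lambda_{\cR,R_0}\subseteq(L_p^\Sigma(\mathbf{f}))$ from Wan (after matching Wan's $L$-function with $L_p^\Sigma(\mathbf{f})$ via the computations of \cite[\S{5.3}]{JSW}), it suffices to verify equality modulo a \emph{single} height-one prime $\widetilde{\wp}_\phi$ at which both sides are nonzero. Choosing $\phi$ so that $\mathbf{f}_\phi$ is the $p$-stabilization of a weight-two newform of level $M$ prime to $p$, Theorem~\ref{thm:cas-BF} supplies exactly that one specialized equality, and Theorem~\ref{thm:control-hida} transports it to the quotient $X_{\rm Gr}^\Sigma(A_{\mathbf{f}})/\widetilde{\wp}_\phi X_{\rm Gr}^\Sigma(A_{\mathbf{f}})$. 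Because the general inclusion $Ch_{\Lambda_\cR/\widetilde{\wp}_\phi}(X/\widetilde{\wp}_\phi X)\subseteq Ch_{\Lambda_\cR}(X)\,{\rm mod}\,\widetilde{\wp}_\phi$ goes the right way, no pseudo-nullity hypothesis is required at this stage (the paper's Lemma~\ref{lem:ps} is reserved for the opposite direction in the proof of Theorem~\ref{thm:IMC}). In short, the missing idea in your write-up is the Skinner--Urban specialization lemma, which reduces the problem to one base point rather than a Zariski-dense family, and thereby makes the weight-uniform extension of Theorem~\ref{thm:cas-BF} unnecessary.
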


\begin{proof}
Let $\mathscr{L}_{\pp,\boldsymbol{\xi}}(\mathbf{f})\in\Lambda_{\cR,R_0}$ be the two-variable
anticyclotomic $p$-adic $L$-function constructed in \cite[\S{2.6}]{cas-2var}, and set
\[
L_p(\mathbf{f}):={\rm Tw}_{\boldsymbol{\xi}^{-1}}(\mathscr{L}_{\pp,\boldsymbol{\xi}}(\mathbf{f})),
\]
where $\boldsymbol{\xi}$ is the $\cR$-adic character constructed in \emph{loc.cit.} from a Hecke character $\lambda$
of infinity type $(1,0)$ and conductor prime to $p$, and ${\rm Tw}_{\boldsymbol{\xi}^{-1}}:\Lambda_{\cR,R_0}\rightarrow\Lambda_{\cR,R_0}$
is the $R_0$-linear isomorphism given by $\gamma\mapsto\boldsymbol{\xi}^{-1}(\gamma)\gamma$ for $\gamma\in\Gamma$.
Viewing $\lambda$ as a character on $\mathbb{A}_K^\times$, let $\lambda^\tau$ denote
the composition of $\lambda$ with the action of complex conjugation on $\mathbb{A}_K^\times$.
If the character $\psi$ appearing in the proof of Theorem~\ref{thm:L-bdp} is taken to be
$\lambda^{1-\tau}:=\lambda/\lambda^\tau$,
then the proof of \cite[Thm.~2.11]{cas-2var} shows that
$L_p(\mathbf{f})$ reduces to $L_p^{}(\mathbf{f}_\phi)$ modulo $\widetilde{\wp}_\phi$
for all $\phi\in\mathcal{X}_{\cR}^a$. Similarly as in $(\ref{eq:def-imp})$,
if for any $\Sigma$ as above we set
\[
L_p^\Sigma(\mathbf{f}):=L_p(\mathbf{f})\times\prod_{w\in\Sigma}P_{\mathbf{f},w}(\epsilon\Psi^{-1}(\gamma_w))\in\Lambda_{\cR,R_0},\nonumber
\]
where $P_{\mathbf{f},w}(X):={\rm det}(1-X\cdot{\rm Frob}_w\vert(T_\mathbf{f}\otimes_{\cR}F_\cR)^{I_w})$,
with $F_\cR$ the fraction field of $\cR$, the specialization property $(\ref{eq:sp-property})$ thus follows.

Let $\phi\in\mathcal{X}_{\cR}^a$ be such that $\mathbf{f}_\phi$ is the $p$-stabilization of
a $p$-ordinary newform $f\in S_2(\Gamma_0(M))$. 
By Theorem~\ref{thm:2-var-IMC}, the associated 
$X_{\rm Gr}^\Sigma(A_f)$ is $\Lambda$-torsion, and we have
\begin{equation}\label{eq:IMC-wt2}
Ch_{\Lambda}(X_{\rm Gr}^\Sigma(A_{f}))\Lambda_{R_0}=
(L_p^\Sigma(f)).
\end{equation}
In particular, by Theorem~\ref{thm:control-hida} (with $A_f$ in place of $E[p^\infty]$)
it follows that $X_{\rm Gr}^\Sigma(A_{\mathbf{f}})$ is $\Lambda_{\cR}$-torsion.
On the other hand, from \cite[Thm.~1.1]{wanIMC} we have the divisibility
\begin{equation}\label{eq:wan-thm}
Ch_{\Lambda_\cR}(X_{\rm Gr}^\Sigma(A_{\mathbf{f}}))\Lambda_{\cR,R_0}\subseteq
(\mathcal{L}_p^\Sigma(\mathbf{f})^-)
\end{equation}
in $\Lambda_{\cR,R_0}$, where $\mathcal{L}_p^\Sigma(\mathbf{f})^-$ is the projection onto
$\Lambda_{\cR,R_0}$ of the 
$p$-adic $L$-function constructed in \cite[\S{7.4}]{wanIMC}.
Since a straightforward extension of the calculations in \cite[\S{5.3}]{JSW} shows that
\begin{equation}\label{eq:compare}
(\mathcal{L}_p^\Sigma(\mathbf{f})^-)=(L_p^\Sigma(\mathbf{f}))
\end{equation}
as ideals in $\Lambda_{\cR,R_0}$, the result follows from an application of \cite[Lem.~3.2]{SU} using
$(\ref{eq:IMC-wt2})$, $(\ref{eq:wan-thm})$, and $(\ref{eq:compare})$. (Note that the possible powers of $p$
in \cite[Cor.~5.3.1]{JSW} only arise when there are primes $q\mid M$ inert in $K$, but these are excluded
by our hypothesis (Heeg) relative to $M$.)
\end{proof}

In order to deduce from Theorem~\ref{thm:2-var-IMC} the anticyclotomic main conjecture
for arithmetic specializations of $\mathbf{f}$ (especially in the cases where
the conductor of $\mathbf{f}_\phi$ is divisible by $p$, which are not covered by Theorem~\ref{thm:cas-BF}), 
we will require the following technical result.

\begin{lem}\label{lem:ps}
Let $X_{\rm Gr}^\Sigma(A_{\mathbf{f}})_{\rm null}$ be the largest pseudo-null $\Lambda_\cR$-submodule
of $X_{\rm Gr}^\Sigma(A_{\mathbf{f}})$, let $\wp\subseteq\cR$ be a height one prime, and let
$\widetilde{\wp}:=\wp\Lambda_\cR$. 
With hypotheses as in Theorem~\ref{thm:2-var-IMC}, the quotient
\[
X_{\rm Gr}^\Sigma(A_{\mathbf{f}})_{\rm null}/\widetilde{\wp}X_{\rm Gr}^\Sigma(A_{\mathbf{f}})_{\rm null}
\]
is a pseudo-null $\Lambda_\cR/\widetilde{\wp}$-module.
\end{lem}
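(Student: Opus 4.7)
The plan is to deduce the lemma from the vanishing of $X_{\rm null} := X_{\rm Gr}^\Sigma(A_{\mathbf{f}})_{\rm null}$ itself. Since $\Lambda_\cR = \cR[[\Gamma]]$ has Krull dimension~$3$ while $\Lambda_\cR/\widetilde{\wp}$ is a $2$-dimensional integral domain (in which pseudo-null means finite length), the conclusion is immediate if $X_{\rm null} = 0$. More precisely, the quotient $X_{\rm null}/\widetilde{\wp}X_{\rm null}$ fails to be pseudo-null over $\Lambda_\cR/\widetilde{\wp}$ exactly when some height-two associated prime $\mathfrak{P}$ of $X_{\rm null}$ contains $\widetilde{\wp}$ (equivalently, $\mathfrak{P} \cap \cR \supseteq \wp$); the lemma thus reduces to the absence of such primes in $X_{\rm null}$, which is automatic once $X_{\rm null} = 0$.

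To prove $X_{\rm null} = 0$, I would invoke the following structural fact, essentially due to Ochiai in his treatment of two-variable Iwasawa theory for ordinary Hida deformations (cf.\ \cite{Ochiai-MC}): under the irreducibility of $\bar\rho_{\mathbf{f}}|_{G_K}$ and the $p$-distinguished hypothesis, the dual Greenberg Selmer group $X_{\rm Gr}^\Sigma(A_{\mathbf{f}})$ has no nonzero pseudo-null $\Lambda_\cR$-submodule. The argument proceeds via a Poitou--Tate exact sequence adapted to the Greenberg local conditions: the freeness of $T_{\mathbf{f}}$ over $\cR$ (coming from residual irreducibility), together with the vanishing of $H^0(I_w, M_{\mathbf{f}})$ at the finite places $w$ where it matters (forced by the hypothesis that $\bar\rho_{\mathbf{f}}$ is ramified at some $q \mid M$ nonsplit in $K$, combined with (Heeg)) and the $p$-distinguishedness at $\pp$ (ensuring the ordinary filtration on $T_{\mathbf{f}}|_{G_{K_\pp}}$ is a rank-one direct summand), jointly eliminate all potential embedded associated primes of the global-to-local map defining $X_{\rm Gr}^\Sigma(A_{\mathbf{f}})$.

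With $X_{\rm null} = 0$ established, the lemma is immediate. The main obstacle is verifying that Ochiai's argument, usually phrased in the cyclotomic two-variable setting, transfers faithfully to the anticyclotomic two-variable setting here, where the Greenberg condition at $\pp$ is ``unramified'' and no condition is imposed at $\overline{\pp}$; this amounts to routine cohomological bookkeeping, but must be checked carefully, especially at $\overline{\pp}$ and at the auxiliary primes in $\Sigma$. If, contrary to expectation, the no-pseudo-null-submodule statement cannot be obtained cleanly in this setup, an alternative route would be to argue prime-by-prime: show directly that no height-two associated prime of $X_{\rm null}$ has contraction to $\cR$ of positive height, by tracking such a prime back through the local Greenberg conditions and using the same local inputs (freeness of $T_\mathbf{f}$, vanishing of $H^0(I_w, M_{\mathbf{f}})$, $p$-distinguishedness) to derive a contradiction.
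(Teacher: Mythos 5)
Your plan reduces the lemma to the assertion that $X_{\rm Gr}^\Sigma(A_{\mathbf f})$ has no nonzero pseudo-null $\Lambda_\cR$-submodule, and then invokes this as a ``structural fact essentially due to Ochiai.'' That is not what the paper proves, and it is not what \cite[Lem.~7.2]{Ochiai-MC} establishes. The paper instead adapts Ochiai's Lemma~7.2, which proves directly the weaker conclusion that $X_{\rm null}/\widetilde{\wp}X_{\rm null}$ is pseudo-null over $\Lambda_\cR/\widetilde{\wp}$, via the explicit global-to-local description $(\ref{eq:useful})$ of the Selmer group and a commutative diagram for multiplication by $\widetilde{\wp}$. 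Tellingly, the paper's proof notes that Ochiai's argument needs $X_{\rm ac}^\Sigma(M_{\mathbf f}[\widetilde\wp])$ to be $\Lambda_\cR/\widetilde{\wp}$-torsion; that hypothesis would be irrelevant to a proof that $X_{\rm null}=0$, which signals that the mechanism in \emph{loc.cit.}\ is a control/specialization comparison of Selmer groups before and after reduction modulo $\widetilde{\wp}$, not an elimination of pseudo-null submodules.

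The gap is concrete: you do not prove $X_{\rm null}=0$. You offer an outline (Poitou--Tate with Greenberg conditions, freeness of $T_{\mathbf f}$, vanishing of $H^0(I_w,M_{\mathbf f})$, $p$-distinguishedness) together with a frank acknowledgement that the transfer from Ochiai's cyclotomic two-variable setting to the anticyclotomic one here ``must be checked carefully, especially at $\overline{\pp}$.'' Those checks are precisely where the content lies: the anticyclotomic Greenberg condition imposes nothing at $\overline{\pp}$ and only an unramified condition at $\pp$, a genuinely different local configuration, and the surjectivity of the global-to-local map required by Greenberg's no-pseudo-null-submodule criterion is not free in this setting. Your alternative route (excluding height-two associated primes of $X_{\rm null}$ containing $\widetilde{\wp}$ prime by prime) is a reformulation of the same unproven claim. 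The paper's argument is both weaker and more robust: it bypasses the hard structural question about $X_{\rm null}$ entirely and proves only the pseudo-nullity of the quotient, which is exactly what the application in Theorem~\ref{thm:IMC} requires. As written, the crucial step of your proposal is asserted rather than established.
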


\begin{proof}
Using 
$(\ref{eq:useful})$ as in the proof of Theorem~\ref{thm:control-hida}
and considering the obvious commutative diagram obtained by applying the map given by multiplication by $\widetilde{\wp}$,
the proof of \cite[Lem.~7.2]{Ochiai-MC} carries through with only small changes. (Note that the argument in \emph{loc.cit.}
requires knowing that $X_\ac^\Sigma(M_{\mathbf{f}}[\widetilde{\wp}])$ is $\Lambda_\cR/\widetilde{\wp}$-torsion,
but this follows immediately from Theorem~\ref{thm:2-var-IMC} and the isomorphism of Theorem~\ref{thm:control-hida}.)
\end{proof}


For the next result, let $E/\bQ$ be an elliptic curve of square-free conductor $N$, and
assume that $K$ satisfies hypothesis (\ref{eq:Heeg-hyp}) relative to $N$, 
and that $p=\pp\overline{\pp}$ splits in $K$.


\begin{thm}\label{thm:IMC}
Assume that
$\bar{\rho}_{E,p}:G_{\bQ}\rightarrow{\rm Aut}_{\mathbf{F}_p}(E[p])\simeq{\rm GL}_2(\mathbf{F}_p)$
is irreducible and ramified at every prime $q\mid N$ which is nonsplit in $K$, and assume
that there is at least one such prime. Then
$Ch_{\Lambda}(X_{\ac}(E[p^\infty]))$ is $\Lambda$-torsion and
\[
Ch_{\Lambda}(X_{\ac}(E[p^\infty]))\Lambda_{R_0}=(L_p(f)).
\]
\end{thm}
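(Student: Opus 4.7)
The plan is to split into two cases according to whether $p\nmid N$ (good reduction at $p$) or $p\mid N$ (multiplicative reduction); the latter is the genuinely new content of the theorem.

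For $p\nmid N$ I would apply Theorem~\ref{thm:cas-BF} directly to $f$ with $M=N$ and $\Sigma=\emptyset$. The irreducibility of $\bar\rho_{E,p}|_{G_K}$ follows from that of $\bar\rho_{E,p}$ combined with the non-CM property of $E$ recorded in the introduction, and the remaining hypotheses transfer verbatim. This yields
\[
Ch_\Lambda(X_{\rm Gr}(E[p^\infty]))\Lambda_{R_0}=(L_p(f)).
\]
To pass from the Greenberg to the anticyclotomic Selmer group I would invoke Proposition~\ref{prop:comparison}: the characteristic ideals differ only by $p^c$ with $c=\sum_{w\text{ nonsplit}}\ord_p c_w(E/K)$, and I would verify $c=0$. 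Indeed, at nonsplit $w\nmid N$ we have $c_w=1$; at nonsplit $w\mid N$, Tate's analytic uniformization shows that $\bar\rho_{E,p}$ is ramified at the rational prime $q$ below $w$ precisely when $p\nmid c_q(E/\bQ)$ (with $c_q(E/\bQ)\in\{1,2\}$ if $q$ is nonsplit multiplicative), and a standard base-change computation gives $c_w(E/K)\mid 2c_q(E/\bQ)$, which therefore remains coprime to $p>3$.

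For $p\mid N$ the elliptic curve $E$ has multiplicative reduction at $p$ and $f$ is $p$-ordinary. Let $\mathbf f\in\cR[[q]]$ be the Hida family through $f$ of tame level $M=N/p$, and let $\phi\in\mathcal{X}^a_\cR$ be the arithmetic specialization with $\mathbf f_\phi=f$. I would take $\Sigma$ equal to the places of $K$ above $M$ so that $\Sigma\cup S_p$ contains all places of ramification of $T_{\mathbf f}$, as required by Theorem~\ref{thm:control-hida}, and apply Theorem~\ref{thm:2-var-IMC} (whose $p$-distinguishedness hypothesis is automatic, since the local representation at $p$ is ordinary with distinct diagonal characters $\chi_{\rm ur}$ and $\epsilon\chi_{\rm ur}$) to obtain
\[
Ch_{\Lambda_\cR}(X^\Sigma_{\rm Gr}(A_{\mathbf f}))\Lambda_{\cR,R_0}=(L_p^\Sigma(\mathbf f)).
\]
Reducing modulo $\widetilde\wp_\phi$, Theorem~\ref{thm:control-hida} canonically identifies the specialization of $X^\Sigma_{\rm Gr}(A_{\mathbf f})$ with $X^\Sigma_{\rm Gr}(E[p^\infty])$, while \eqref{eq:sp-property} specializes $L_p^\Sigma(\mathbf f)$ to $L_p^\Sigma(f)$. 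I would then strip off the Euler factors at $w\in\Sigma$ on both sides (using the tautological exact sequence relating primitive with $\Sigma$-imprimitive Selmer groups together with definition \eqref{eq:def-imp}) and apply Proposition~\ref{prop:comparison} exactly as in Case~1 to conclude.

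The main obstacle is the descent of characteristic ideals under the specialization $\widetilde\wp_\phi$: a priori, $Ch_{\Lambda_\cR}(X)\bmod\widetilde\wp_\phi$ and $Ch(X/\widetilde\wp_\phi X)$ differ by the characteristic ideal of the largest pseudo-null $\Lambda_\cR$-submodule of $X$. Lemma~\ref{lem:ps} is exactly tailored to this situation: it shows that the pseudo-null part of $X^\Sigma_{\rm Gr}(A_{\mathbf f})$ specializes to a pseudo-null $\Lambda_\cR/\widetilde\wp_\phi$-module, so the descent of characteristic ideals is valid and the one-variable IMC for $E$ can be extracted cleanly.
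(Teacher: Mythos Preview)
Your argument for the multiplicative case $p\mid N$ is essentially the paper's own: pass to the Hida family of tame level $N/p$, apply the two-variable main conjecture (Theorem~\ref{thm:2-var-IMC}), specialize using Theorem~\ref{thm:control-hida}, control the pseudo-null defect via Lemma~\ref{lem:ps}, then remove the Euler factors at $\Sigma$ and descend from the Greenberg to the anticyclotomic Selmer group via Proposition~\ref{prop:comparison}. The paper organizes the descent as a prime-by-prime length computation at height one primes of $\Lambda_{\wp,R_0}$, but the content is the same, and your reading of Lemma~\ref{lem:ps} as precisely the ingredient that makes $Ch(X)\bmod\widetilde\wp$ agree with $Ch(X/\widetilde\wp X)$ is correct. (One cosmetic point: the paper takes $\Sigma$ to also contain the primes above $D$, but your smaller $\Sigma$ already satisfies the hypothesis of Theorem~\ref{thm:control-hida}.)

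There is, however, a genuine omission in your Case~1. Theorem~\ref{thm:cas-BF} is stated only for \emph{$p$-ordinary} newforms $f$ of level prime to $p$, so invoking it ``directly'' when $p\nmid N$ covers only the good ordinary case. The statement of Theorem~\ref{thm:IMC} carries no ordinariness hypothesis, and the paper treats good supersingular $p$ by a separate citation (to \cite[Thm.~5.1]{cas-wan-ss}); your proposal says nothing about this case. Since in the supersingular setting there is no ordinary filtration and the relevant input comes from an entirely different construction, this is not a detail one can absorb into the Greenberg/anticyclotomic comparison of Proposition~\ref{prop:comparison}.

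A smaller point: your deduction of the irreducibility of $\bar\rho_{E,p}\vert_{G_K}$ from irreducibility over $\bQ$ ``combined with the non-CM property'' is not quite a proof. The paper instead appeals to \cite[Lem.~2.8.1]{skinner}, which uses the existence of the nonsplit prime $q\mid N$ at which $\bar\rho_{E,p}$ is ramified; that ramification obstructs $\bar\rho_{E,p}$ from being induced from a character of $G_K$, which is what would be forced if the restriction became reducible.
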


\begin{proof}
If $E$ has good ordinary (resp. supersingular)
supersingular reduction at $p$, the result  
follows from \cite[Thm.~3.4]{cas-BF} (resp.
\cite[Thm.~5.1]{cas-wan-ss}). 
(Note that bBy \cite[Lem.~2.8.1]{skinner} the hypotheses in Theorem~\ref{thm:IMC} 
imply that $\bar{\rho}_{E,p}\vert_{G_K}$ is irreducible.) 
Since the conductor of $N$ is square-free, it remains to consider the case in which
$E$ has multiplicative reduction at $p$. The associated newform
$f=\sum_{n=1}^\infty a_nq^n\in S_2(\Gamma_0(N))$ 
then satisfies $a_p=\pm{1}$ (see e.g. \cite[Lem.~2.1.2]{skinner-mult});
in particular, $f$ is $p$-ordinary. Let $\mathbf{f}\in\cR[[q]]$ be the ordinary $\cR$-adic cusp eigenform of tame level
$N_0:=N/p$ attached to $f$, so that $\mathbf{f}_\phi=f$ for some $\phi\in\mathcal{X}_\cR^a$.
Let $\wp:={\rm ker}(\phi)\subseteq\cR$ be the associated height one prime, and set
\[
\widetilde{\wp}:=\wp\Lambda_{\cR,R_0},\quad
\Lambda_{\wp,R_0}:=\Lambda_{\cR,R_0}/\widetilde{\wp},\quad
\widetilde{\wp}_0:=\widetilde{\wp}\cap\Lambda_\cR,\quad
\Lambda_{\wp}:=\Lambda_\cR/\wp_0.
\]

Let $\Sigma$ be a finite set of places of $K$ not dividing $p$ containing
the primes above $N_0D$, where $D$ is the discriminant of $K$.
As shown in the proof of \cite[Thm.~6.1.6]{JSW}, it suffices to show that
\begin{equation}\label{eq:impr}
Ch_{\Lambda}(X_{\ac}^\Sigma(E[p^\infty]))\Lambda_{R_0}=(L_p^\Sigma(f)).
\end{equation}

Since $\mathbf{f}$ specializes 
$f$, which has weight 2 and trivial nebentypus, the residual representation $\bar{\rho}_{\mathbf{f}}\simeq\bar{\rho}_{E,p}$
is automatically $p$-distinguished (see \cite[Rem.~7.2.7]{KLZ2}). Thus our assumptions 
imply that the hypotheses in Theorem~\ref{thm:2-var-IMC} are satisfied, which
combined with Theorem~\ref{thm:control-hida} show that $X_{\rm Gr}^\Sigma(E[p^\infty])$
is $\Lambda$-torsion. Moreover, letting $\mathfrak{l}$ be any height one prime of $\Lambda_{\wp,R_0}$
and setting $\mathfrak{l}_0:=\mathfrak{l}\cap\Lambda_\wp$, by Theorem~\ref{thm:control-hida} we have
\begin{equation}\label{eq:control}
{\rm length}_{(\Lambda_\wp)_{\frakl_0}}(X_{\rm Gr}^\Sigma(E[p^\infty])_{\frakl_0})=
{\rm length}_{(\Lambda_\wp)_{\frakl_0}}((X^\Sigma_{\rm Gr}(A_{\mathbf{f}})/\widetilde{\wp}_0 X^\Sigma_{\rm Gr}(A_{\mathbf{f}}))_{\frakl_0}).
\end{equation}
On the other hand, if $\widetilde{\mathfrak{l}}\subseteq\Lambda_{\cR,R_0}$ maps to $\mathfrak{l}$
under the specialization map $\Lambda_{\cR,R_0}\rightarrow\Lambda_{\wp,R_0}$ and we set
$\widetilde{\mathfrak{l}}_0:=\widetilde{\mathfrak{l}}\cap\Lambda_\cR$, by Theorem~\ref{thm:2-var-IMC} we have
\begin{equation}\label{eq:2-var-IMC}
{\rm length}_{(\Lambda_\cR)_{\widetilde{\frakl}_0}}(X_{\rm Gr}^\Sigma(A_{\mathbf{f}})_{\widetilde{\frakl}_0})
={\rm ord}_{\widetilde{\frakl}}(L_p^\Sigma(\mathbf{f})\;{\rm mod}\;\widetilde{\wp})
={\rm ord}_\frakl(L_p^\Sigma(f)).
\end{equation}
Since Lemma~\ref{lem:ps} implies the equality
\[
{\rm length}_{(\Lambda_\wp)_{\frakl_0}}((X^\Sigma_{\rm Gr}(A_{\mathbf{f}})/\widetilde{\wp}_0 X^\Sigma_{\rm Gr}(A_{\mathbf{f}}))_{\frakl_0})
={\rm length}_{(\Lambda_{\cR})_{\widetilde{\frakl}_0}}(X_{\rm Gr}^\Sigma(A_{\mathbf{f}})_{\widetilde{\frakl}_0}),
\]
combining $(\ref{eq:control})$ and $(\ref{eq:2-var-IMC})$ we conclude that
\[
{\rm length}_{(\Lambda_\wp)_{\frakl_0}}(X_{\rm Gr}^\Sigma(E[p^\infty])_{\frakl_0})
={\rm ord}_\frakl(L_p^\Sigma(f))
\]
for every height one prime $\frakl$ of $\Lambda_{\wp,R_0}$, and so
\begin{equation}\label{eq:imprim}
Ch_{\Lambda}(X_{\rm Gr}^\Sigma(E[p^\infty]))\Lambda_{R_0}=(L_p^\Sigma(f)).
\end{equation}
Finally, since our hypothesis on $\bar{\rho}_{E,p}$ implies that $c_w(E/K)$ is a $p$-adic
unit for every prime $w$ nonsplit in $K$ (see e.g. \cite[Def.~3.3]{pollack-weston}),
we have $Ch_{\Lambda}(X_{\rm Gr}^\Sigma(E[p^\infty]))=Ch_{\Lambda}(X_{\rm ac}^\Sigma(E[p^\infty]))$
by Proposition~\ref{prop:comparison}.
Equality $(\ref{eq:imprim})$ thus reduces to $(\ref{eq:impr})$,
and the proof of Theorem~\ref{thm:IMC} follows.
\end{proof}

\section{Proof of Theorem~A}

Let $E/\bQ$ be a semistable elliptic curve of conductor $N$ as in the statement of Theorem~A;
in particular, we note that there exists a prime $q\neq p$ such that $E[p]$ is ramified at $q$.
Indeed, if $p\mid N$ this follows by hypothesis, 
while if $p\nmid N$ the existence
of such $q$ follows from Ribet's level lowering theorem \cite[Thm~1.1]{ribet-eps}, as explained in the first
paragraph of \cite[\S{7.4}]{JSW}.




\begin{proof}[Proof of Theorem~A]

Choose an imaginary quadratic field $K=\bQ(\sqrt{D})$ of discriminant $D<0$ such that
\begin{itemize}
\item{} $q$ is ramified in $K$;
\item{} every prime factor $\ell\neq q$ of $N$ splits in $K$;
\item{} $p$ splits in $K$;
\item{} $L(E^{D},1)\neq 0$.
\end{itemize}
(Of course, when $p\mid N$ the third condition is redundant.)
By Theorem~\ref{thm:IMC} and Proposition~\ref{thm:bdp} we have the equalities
\begin{equation}\label{eq:IMC+BDP}
\#\bZ_p/f_{\rm ac}(0)=\#\bZ_p/L_p(f,\mathds{1})
=\#\left(\bZ_p/(1-a_pp^{-1}+\varepsilon_p)\log_{\omega_E}P_K\right)^2,
\end{equation}
where $\varepsilon_p=p^{-1}$ if $p\nmid N$ and $\varepsilon_p=0$ otherwise, and $P_K\in E(K)$
is a Heegner point. Since we assume that ${\rm ord}_{s=1}L(E,s)=1$, our last hypothesis on $K$
implies that ${\rm ord}_{s=1}L(E/K,s)=1$, and so 
$P_K$ has infinite order, ${\rm rank}_{\bZ}(E(K))=1$ and $\#\Sha(E/K)<\infty$ by the work of
Gross--Zagier and Kolyvagin. This verifies the hypotheses
in Theorem~\ref{thm:control-greenberg}, which (taking $\Sigma=\emptyset$ and $P=P_K$)
yields a formula for $\#\bZ_p/f_{\rm ac}(0)$ that combined with $(\ref{eq:IMC+BDP})$
immediately leads to
\begin{equation}\label{eq:+control}
{\rm ord}_p(\#\Sha(E/K)[p^\infty])=2\cdot{\rm ord}_p([E(K):\bZ.{P}_K])
-\sum_{\substack{w\mid N^+}}{\rm ord}_p(c_w(E/K)),
\end{equation}
where $N^+$ is the product of the prime factors of $N$ which are split in $K$.
Since $E[p]$ is ramified at $q$,
we have ${\rm ord}_p(c_w(E/K))=0$ for every prime $w\mid q$
(see e.g. \cite[Lem.~6.3]{zhang-Kolyvagin} and the discussion right after it),
and since $N^+=N/q$ by our choice of $K$, we see that $(\ref{eq:+control})$
can be rewritten as
\begin{equation}\label{eq:+control-bis}
{\rm ord}_p(\#\Sha(E/K)[p^\infty])=2\cdot{\rm ord}_p([E(K):\bZ.{P}_K])
-\sum_{w\mid N}{\rm ord}_p(c_w(E/K)).
\end{equation}

On the other hand, as explained
in \cite[p.~47]{JSW} the Gross--Zagier formula \cite{GZ}, \cite{YZZ} (as refined in \cite{CSY})
can be paraphrased as the equality
\[
\frac{L'(E,1)}{\Omega_E\cdot{\rm Reg}(E/\bQ)}\cdot\frac{L(E^D,1)}{\Omega_{E^D}}=[E(K):\bZ.{P}_K]^2
\]
up to a $p$-adic unit,\footnote{This uses a period relation coming from
\cite[Lem.~9.6]{skinner-zhang}, which assumes that $(D,pN)=1$, but
the same argument applies replacing $D$ by $D/(D,pN)$ in the last paragraph of the proof of their result.}
which combined with (\ref{eq:+control-bis}) and the immediate relation
\[
\sum_{w\mid N}c_w(E/K)
=\sum_{\ell\mid N}c_\ell(E/\bQ)+\sum_{\ell\mid N}c_\ell(E^D/\bQ)
\]
(see \cite[Cor.~7.2]{skinner-zhang}) leads to the equality
\[
{\rm ord}_p(\#\Sha(E/K)[p^\infty])={\rm ord}_p\biggl(\frac{L'(E,1)}{\Omega_E\cdot{\rm Reg}(E/\bQ)
\prod_{\ell\mid N}c_\ell(E/\bQ)}\cdot\frac{L(E^D,1)}{\Omega_{E^D}\prod_{\ell\mid N} c_\ell(E^D/\bQ)}\biggr).
\]

Finally, since $L(E^D,1)\neq 0$, by the known $p$-part of the Birch and Swinnerton-Dyer formula for $E^D$ (as recalled in
\cite[Thm.~7.2.1]{JSW}) we arrive at
\[
{\rm ord}_p(\#\Sha(E/\bQ)[p^\infty])={\rm ord}_p\biggl(\frac{L'(E,1)}{\Omega_E\cdot{\rm Reg}(E/\bQ)
\prod_{\ell\mid N}c_\ell(E/\bQ)}\biggr),
\]
concluding the proof of the theorem.
\end{proof}





\bibliographystyle{amsalpha}
\bibliography{Heegner}

\end{document}